\newtheorem{thm}{Theorem}[section]
\newtheorem{lem}[thm]{Lemma}
\newtheorem{prop}[thm]{Proposition}
\newtheorem{defn}[thm]{Definition}
\newtheorem{obs}[thm]{Observation}
\numberwithin{equation}{section}
\newcommand{\N}{{\mathbb{N}}}
\begin{document}
\title[Minimal Pairs]{Nonexistence of Minimal Pairs for Generic Computability}
\author{Gregory Igusa}

\maketitle

\begin{abstract}

A generic computation of a subset $A$ of $\mathbb{N}$ consists of a a computation that correctly computes \it most \rm of the bits of $A$, and which never incorrectly computes any bits of $A$, but which does not necessarily give an answer for every input. The motivation for this concept comes from group theory and complexity theory, but the purely recursion theoretic analysis proves to be interesting, and often counterintuitive. The primary result of this paper is that there are no minimal pairs for generic computability, answering a question of Jockusch and Schupp.

\end{abstract}

\section{Introduction}

In a recent paper, Jockusch and Schupp \cite{stuff} introduce and analyze a number of notions of almost computability. Following their notation, we make the following definitions:

\begin{defn}
\rm Let $A$ be a subset of the natural numbers. Then $A$ has \it{density 1} \rm if the limit of the densities of its initial segments is 1, or in other words, if $\lim_{n\rightarrow\infty}\frac{|{A\upharpoonright n}|}{n}=1$. In this case we will frequently say that $A$ is \it{density-1.} \rm
\end{defn}

In this paper, a subset of the natural numbers is often referred to as a real. Note that the intersection of two reals is density-1 if and only if each of the reals is density-1. Binary branching trees and other countable sets will sometimes be referred to as reals, although the density of a real will never be mentioned unless it is explicitly described as a subset of $\mathbb{N}$.

\begin{defn}
\rm A real $A$ is \it{generically computable} \rm if there exists a partial recursive function $\varphi$ whose domain has density 1 such that if $\varphi(n)=1$ then $n\in A$, and if $\varphi(n)=0$ then $n\notin A$.
\end{defn}

\begin{defn}
\rm For reals $A$ and $B$, $A$ is \it{generically B-computable} \rm if $A$ is generically computable using $B$ as an oracle. In this case, we frequently say B \it{generically computes} \rm A.
\end{defn}

Note that this notion of computation yields a binary relation on reals which is highly nontransitive, and which should not be confused by the transitive relation, \it generic reducibility\rm, in which only an enumerably presented density-1 subset of the information in the oracle can be used. (Generic reducibility will be defined and discussed in more detail in Section 3.)

In fact, generic computation is so far from being transitive that any countable reflexive binary relation embeds in the reals under this relation. To prove this, we first make three observations:



\begin{obs}\label{obs1}
There exists a recursive reflexive binary relation $R$ on $\mathbb{N}$ such that for any reflexive binary relation $R^\prime$ with countable domain, $R^\prime$ is isomorphic to $R$ restricted to some subset of $\mathbb{N}$.
\end{obs}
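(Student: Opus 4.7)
The plan is to construct $R$ explicitly by a recursive binary encoding and then verify the universality property by a straightforward induction. The core idea is to make $R$ satisfy a one-point extension property so rich that every finite reflexive configuration can be realized arbitrarily often, after which any countable reflexive binary relation embeds into $R$ by stepwise extension.

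Concretely, I will set $R(n,n)$ for all $n$ (ensuring reflexivity), and for $m<n$ define $R(m,n)$ to hold iff the bit at position $2m$ in the binary expansion of $n$ is $1$, and $R(n,m)$ to hold iff the bit at position $2m+1$ is $1$. This $R$ is plainly recursive. The key property to extract is: given any finite set $n_0<n_1<\cdots<n_{k-1}$ and any prescribed values $r_0,s_0,\ldots,r_{k-1},s_{k-1}\in\{0,1\}$, there exist infinitely many $n>n_{k-1}$ such that $R(n_i,n)=r_i$ and $R(n,n_i)=s_i$ for every $i<k$. This is immediate because the required bits occupy the finitely many positions $2n_i$ and $2n_i+1$, which are pairwise disjoint, so the remaining high-order bits can be chosen freely.

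Given any countable reflexive binary relation $R'$ on a domain enumerated as $\{a_0,a_1,\ldots\}$, I will then build an embedding $f\colon a_i\mapsto n_i$ recursively. Starting with $n_0$ arbitrary, at stage $k$ I will apply the extension property to the finite tuple $n_0<\cdots<n_{k-1}$ already chosen, prescribing $r_i=1$ iff $R'(a_i,a_k)$ and $s_i=1$ iff $R'(a_k,a_i)$, to obtain some $n_k>n_{k-1}$ realizing the pattern. Reflexivity of both $R$ and $R'$ handles the diagonal automatically. By construction $f$ is injective with range some subset of $\mathbb{N}$, and for all $i,j$ we have $R'(a_i,a_j)$ iff $R(n_i,n_j)$, so $R'$ is isomorphic to $R$ restricted to $\{n_0,n_1,\ldots\}$.

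There is no serious obstacle here: the entire statement is a direct construction once the encoding is chosen so that relations involving a new vertex are controlled by fresh bit positions. The only thing requiring care is the bookkeeping that ensures pairwise disjointness of the bit positions used for different pairs, which is built in by indexing the pair $\{m,n\}$ (with $m<n$) through positions $2m$ and $2m+1$ of $n$.
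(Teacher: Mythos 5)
Your proposal is correct, and the construction is genuinely different from the paper's even though the underlying idea is the same. The paper builds $R$ in stages: at stage $s+1$ it adjoins $4^n$ fresh elements (where $n$ is the current domain size), one for each possible way to extend the current finite relation by a single new point, with new elements related only to themselves. You instead give a closed-form arithmetic definition: for $m<n$, you read $R(m,n)$ and $R(n,m)$ off bits $2m$ and $2m+1$ of the binary expansion of $n$. Both proofs rest on the same pivotal fact, the one-point extension property for finite configurations, and both then embed an arbitrary countable reflexive relation by picking images one at a time. The difference is purely in how that extension property is secured: the paper secures it by construction bookkeeping, and you secure it by noting that the relevant bit positions $2n_0,2n_0+1,\ldots,2n_{k-1},2n_{k-1}+1$ are pairwise distinct and leave infinitely many higher bits free. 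Your version is somewhat more self-contained and avoids stage indices entirely; the paper's version more visibly resembles a Fraiss\'e-style amalgamation construction (which the paper also mentions as an alternative). Both are complete and correct.
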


The relation $R$ is simply the Fraisse limit of the finite reflexive binary relations. We also present a direct construction of $R$:

\begin{proof}
We build our relation $R$ in stages. At the end of any stage, $R$ will be defined on a finite initial segment of $\mathbb{N}$.

At stage 0, the domain of $R$ is $\lbrace0\rbrace$, and $\langle0,0\rangle\in R$

At stage $s+1$, we extend $R$ to be defined on the next $4^n$ many elements of $\mathbb{N}$, where $n$ is the domain of $R$ at the end of stage $s$. For every possible way to extend $R\upharpoonright n$ by one element, we take one of the new elements and define $R$ on on that element via the chosen extension. Every new element is related to itself (to maintain reflexivitiy), and no new element is related to any other new element. (Note that since the relationship is not symmetric, we need $4^n$ many new elements. For every old element $a$, we must choose whether a new element $b$ satisfies $aRb$ and also whether it satisfies $bRa$.)

This construction can clearly be carried out recursively. Any countable reflexive binary relation $R^\prime$ embeds in it by picking a counting $\langle a_i\rangle$ of the domain of $R^\prime$, and mapping inductively each $a_i$ to an element that was added to the domain of $R$ at stage $i$. (By the construction of $R$, when the embedding has been defined on $a_0,...,a_n$, there is exists a way to extend the embedding to map $a_{n+1}$ to an element added at stage $n+1$.)
\end{proof}

\begin{obs}\label{R}
For any real $A$, there is another real $\mathcal{R}(A)$ such that $A$ generically computes $\mathcal{R}(A)$ and $A$ can be computed from any generic description of $\mathcal{R}(A)$.
\end{obs}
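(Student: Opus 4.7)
The plan is to code $A$ into $\mathcal{R}(A)$ in a highly redundant way, using blocks of exponentially growing size so that any density-$1$ subset of $\mathbb{N}$ is forced to intersect cofinitely many blocks. Partition $\mathbb{N}$ into the blocks $B_n=[2^n,2^{n+1})$, put all of $B_n$ into $\mathcal{R}(A)$ if $n\in A$, and leave $B_n$ disjoint from $\mathcal{R}(A)$ if $n\notin A$. This definition is uniformly recursive in $A$, so $A$ in fact computes $\mathcal{R}(A)$ outright, and hence generically computes it.

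For the nontrivial direction, let $\varphi$ be any generic description of $\mathcal{R}(A)$ and let $D=\mathrm{dom}(\varphi)$, a set of density $1$. The key observation is that the complement $\mathbb{N}\setminus D$ can contain the full block $B_n$ for only finitely many $n$: if $B_n\subseteq\mathbb{N}\setminus D$, then
\[
|(\mathbb{N}\setminus D)\cap[0,2^{n+1})|\geq 2^n,
\]
so the initial-segment density of $\mathbb{N}\setminus D$ at $2^{n+1}$ is at least $\tfrac{1}{2}$, which is incompatible with density $0$ for large $n$. Consequently, for all but finitely many $n$ there exists some $k\in B_n$ with $\varphi(k)\downarrow$, and the value $\varphi(k)$ correctly reports $\mathcal{R}(A)(k)$, which by construction equals $A(n)$.

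The recovery algorithm is now immediate: on input $n$, using $\varphi$ as an oracle, search the elements of $B_n$ in order for the least $k$ with $\varphi(k)\downarrow$, and output $\varphi(k)$. This succeeds for all but finitely many $n$, and the bits of $A$ at the exceptional positions can be hardcoded into the reduction. The only subtle point is the block-growth estimate above, and exponential growth makes that trivial; any choice with $|B_n|$ bounded below by a positive fraction of the total length used so far would work just as well, while something as slow as $|B_n|=n$ would leave room for a density-$0$ set to swallow individual blocks.
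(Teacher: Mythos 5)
Your proof is correct, and it does establish the observation as stated, but the coding you chose is genuinely different from the paper's, in a way the paper itself flags as consequential. You code bit $n$ of $A$ into the finite block $B_n=[2^n,2^{n+1})$; this is essentially the coding the paper later introduces under the name $\widetilde{\mathcal{R}}$ (``$n\in\widetilde{\mathcal{R}}(X)\leftrightarrow m\in X$, where $2^m$ is the largest power of $2$ less than $n$''). The paper's $\mathcal{R}$ instead codes bit $m$ of $A$ into the set of $n$ whose largest power-of-$2$ divisor is exactly $2^m$, a set of \emph{positive density} $2^{-m-1}$. Both choices give the observation, and the paper explicitly says a block coding like yours ``would have sufficed for the purposes of Observation \ref{R}.'' The trade-off is uniformity: with $\mathcal{R}$, every bit of $A$ lives in a positive-density set, so a generic description must hit it and one recovers \emph{every} bit of $A$ by a single uniform procedure; with your block coding, each bit lives in a finite set which a density-$1$ complement may still swallow, so you only recover cofinitely many bits and must hardcode the finite exceptions, which is inherently non-uniform. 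The paper relies on the uniform recovery in Section~3 (Observation \ref{R2}) to embed the Turing degrees into the generic degrees, and then deliberately contrasts $\mathcal{R}$ with $\widetilde{\mathcal{R}}$ (Proposition \ref{nonequivalence}) precisely to separate uniform from non-uniform generic reducibility. So your proof is fine for this observation, but if you carried your $\mathcal{R}$ forward you would lose the uniformity needed later.
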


There are many different constructions of $\mathcal{R}(A)$, any of which will suffice for the purposes of this proposition. An example would be coding the entries of $A$ into the columns of $\mathcal{R}(A)$, ie $n\in \mathcal{R}(A)\leftrightarrow m\in A$, where $2^m$ is the largest power of $2$ dividing $n$. A more thorough proof will be presented in Section 3, where the specifics of the definition of $\mathcal{R}(A)$ will become more important.

\begin{obs}\label{obs3}
There exists a countable sequence of reals $\langle X_i\rangle$ such that for each $i$, $X_i$ cannot be computed by the recursive join of the rest of the $X_j$.
\end{obs}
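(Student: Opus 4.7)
The plan is to build the sequence $\langle X_i\rangle$ by a Kleene--Post style finite-extension construction. Enumerate the requirements
\[
R_{i,e} : X_i \neq \Phi_e^{J_i}, \qquad J_i = \bigoplus_{j \neq i} X_j,
\]
where $J_i$ denotes the standard effective join $\{\langle j,n\rangle : j \neq i,\ n \in X_j\}$, and handle them one at a time in stages. At each stage only finitely many bits of finitely many $X_j$'s will have been committed.

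To address $R_{i,e}$ at its stage, I would pick a witness position $n$ beyond every bit of $X_i$ committed so far and split on the following dichotomy. If some finite extension of the current commitments of the $X_j$ ($j \neq i$) causes $\Phi_e^{\bigoplus \tau_j}(n)$ to converge to a value $v$, I adopt those extensions and commit $X_i(n) = 1-v$, guaranteeing $X_i(n) \neq \Phi_e^{J_i}(n)$. Otherwise no such extension makes the computation converge, so by use-continuity $\Phi_e^{J_i}(n)$ must diverge in the final sequence no matter how we complete the construction, and the requirement is satisfied automatically. After all requirements have been visited, taking $X_j$ to be the union of its committed segments (padding uncommitted bits by $0$) yields the desired reals.

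The only potential obstacle is that the ``oracle'' $J_i$ is an infinite join while the construction commits only finitely much data per stage, so a priori it is not obvious that forcing divergence on a finite piece suffices. This is resolved by the same use-continuity remark already invoked above: convergence of $\Phi_e^{J_i}(n)$ depends only on finitely many bits of the join, which is exactly the kind of object visible at a finite stage. With that in hand the standard Kleene--Post dichotomy applies unchanged, and the construction goes through routinely.
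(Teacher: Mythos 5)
Your construction is correct, but it takes a genuinely different route from the paper. The paper simply observes that the columns of any arithmetically Cohen generic subset of $\mathbb{N}\times\mathbb{N}$ have the desired property, and leaves it at that: the mutual genericity of the columns immediately gives that no column is computable from the join of the others. You instead build the sequence from scratch by a Kleene--Post finite-extension argument, enumerating the requirements $R_{i,e}: X_i \neq \Phi_e^{J_i}$ and diagonalizing one at a time, using the standard convergence/divergence dichotomy plus use-continuity to argue that an unforceable convergence never materializes no matter how the construction is completed. The two arguments are morally the same -- Cohen forcing unwinds exactly into this kind of finite-extension diagonalization -- but yours is self-contained and elementary while the paper's is a one-line appeal to a known object. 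The tradeoff is concision versus transparency: the paper's version is shorter and hands you extra structure (the columns are in fact mutually arithmetically generic, far more than the observation needs), while yours proves exactly what is stated with no outside machinery. One small bookkeeping point worth tightening in a final write-up: when you say ``some finite extension of the current commitments causes $\Phi_e^{\bigoplus\tau_j}(n)$ to converge,'' you should make explicit that the extension is a finite partial description of $J_i$ that answers every oracle query of the computation, and that when you adopt it you also fix (say, to $0$) the intervening unqueried bits of the relevant $X_j$ so that the committed data remain finite initial segments -- otherwise ``the current commitments'' at later stages are not well-defined strings. This is routine, but it is the kind of thing that should be said.
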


This is satisfied by the columns of any arithmetically Cohen generic subset of $\mathbb{N}\times\mathbb{N}$.\\

With these three observations, we prove:

\begin{prop}
For any reflexive binary relation $R$, there exists a subset $S$ of $\mathbb{R}$ such that $\langle\mathbb{N},R\rangle$ is isomorphic to $S$ paired with relative generic computation.

\end{prop}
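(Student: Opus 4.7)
By Observation~\ref{obs1}, it suffices to realize the single universal recursive reflexive relation $R$ on $\mathbb{N}$, since any countable reflexive relation embeds into it by restriction. Fix a sequence $\langle X_i\rangle$ as provided by Observation~\ref{obs3} and the rigid coding operator $\mathcal{R}$ from Observation~\ref{R}. Partition $\mathbb{N}$ recursively as $M\sqcup\bigsqcup_{k\in\mathbb{N}} A_k$, where $M$ has density $1$ and each $A_k$ is an infinite recursive set of density $0$, and fix an order-preserving recursive bijection $\alpha\colon M\to\mathbb{N}$ together with recursive bijections $\beta_k\colon A_k\to\mathbb{N}$.

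For each $i\in\mathbb{N}$, define $S_i\subseteq\mathbb{N}$ by stipulating that $n\in S_i\iff\alpha(n)\in\mathcal{R}(X_i)$ when $n\in M$, that $n\in S_i\iff\beta_k(n)\in X_k$ when $n\in A_k$ and $iRk$, and that $n\notin S_i$ when $n\in A_k$ and $\neg(iRk)$. Since $\mathcal{R}(X_i)\equiv_T X_i$ and the partition is uniformly recursive, $S_i\equiv_T\bigoplus_{k:iRk}X_k$; in particular the $S_i$ are pairwise distinct (distinct $X_i$ give distinct $S_i$ on $M$, and the $X_i$ themselves are distinct by Observation~\ref{obs3}).

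For the direction $iRj\Rightarrow S_i$ generically computes $S_j$, I describe $\varphi^{S_i}$ as follows: on $n\in M$, read the single relevant bit of $X_j$ from $S_i\cap A_j$ (available since $iRj$) and output $\mathcal{R}(X_j)(\alpha(n))$; on $n\in A_k$ with $\neg(jRk)$, output $0$; on $n\in A_k$ with $jRk$ and $iRk$, return $S_i(n)$; on $n\in A_k$ with $jRk$ but $\neg(iRk)$, leave $\varphi$ undefined. The undefined locus sits in $M^c$, so it has density $0$ and $\varphi^{S_i}$ is a valid generic computation of $S_j$. For the converse, suppose $\varphi^{S_i}$ generically computes $S_j$. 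Composing with $\alpha^{-1}$ on $M$, and using that $\alpha$ is order-preserving and $M$ has density $1$ to conclude that density $1$ is preserved, produces an $S_i$-partial computable generic description of $\mathcal{R}(X_j)$. By Observation~\ref{R}, $X_j\leq_T S_i$. If $\neg(iRj)$ then $S_i\equiv_T\bigoplus_{k:iRk}X_k\leq_T\bigoplus_{k\ne j}X_k$, which by Observation~\ref{obs3} does not compute $X_j$, a contradiction; hence $iRj$.

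The main obstacle is arranging the positive direction: the set $\{k:jRk,\ \neg(iRk)\}$ can be infinite, so without care the locations of $S_j$ that $S_i$ cannot decode would have positive density. The design exiles every plain $X_k$-dependence of $S_j$ to the density-$0$ auxiliary block $A_k$ and places only the rigid $\mathcal{R}(X_j)$-coding on the density-$1$ block $M$. This simultaneously forces all ignorable inputs in the positive direction into a density-$0$ region and forces any generic computation in the negative direction to pin down $\mathcal{R}(X_j)$ on all of $M$, and hence to reconstruct $X_j$ in full.
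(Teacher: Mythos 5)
Your proof is correct and follows essentially the same approach as the paper's: reduce to a universal recursive relation via Observation~\ref{obs1}, force full recovery of $X_i$ from any generic computation by placing $\mathcal{R}(X_i)$ on the density-1 part, and code $\bigoplus_{k:iRk}X_k$ into a density-0 part so that $S_i \equiv_T \bigoplus_{k:iRk}X_k$ and Observation~\ref{obs3} finishes the negative direction. The paper packages the density-0 coding into a single ``asymmetric join'' (powers of $2$) carrying $Y_i=\bigoplus_{k:iRk}X_k$ rather than spreading the columns across separate blocks $A_k$, but this is only a cosmetic difference in bookkeeping.
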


\begin{proof}
By Observarion \ref{obs1}, it suffices to prove the proposition for recursive relations.

To prove this, we first define the asymmetric join of two reals in the following way. Fix any recursive partition of $\mathbb{N}$ into two parts, a density-0 part, $S$, and an infinite density-1 part, $L$. In particular, let $S$ be the powers of 2 and $L$ be the numbers that are not powers of 2. Then the asymmetric join of $A$ and $B$ is equal to $\mathcal{R}(A)$ on $L$, and has $B$ coded in to $S$ ($2^n$ is in the asymmetric join of $A$ and $B$ iff $n\in B$).

Note then that the asymmetric join of $A$ and $B$ has the same Turing degree as the usual join (it computes $B$, and it computes a density-1 subset of $\mathcal{R}(A)$). However, the ability to generically compute this asymmetric join is equivalent to the ability to compute of $A$. This is because a a density-1 subset of the bits of the generic join must include a density-1 subset of the bits of $\mathcal{R}(A)$, and $A$ can be computed from any density-1 subset of the bits of $\mathcal{R}(A)$. Conversely, if one could compute $A$, then one could compute $\mathcal{R}(A)$, and therefore one could compute the bits of the asymmetric join on all of $L$, yielding a generic computation of the asymmetric join of $A$ and $B$.\\


To conclude the proof, fix a recursive relation $R$. Choose $\langle X_i\rangle$ as in Observation $\ref{obs3}$, and for each $i$, define $Y_i$ to be the recursive join of the set of $X_j$ such that $R(i,j)$. (More precisely $Y_i=\lbrace\langle n,j\rangle|n\in X_j\wedge R(i,j)\rbrace$.) Let $Z_i$ be the asymmetric join of $X_i$ and $Y_i$.


If $R(i,j)$ then $Y_i\geq_TX_j$, so $Z_i\geq_TX_j$, so in particular we have that $Z_i$ generically computes $Z_j$. Likewise, if $\neg R(i,j)$ then $Z_i$ is computable from the join of the $X_k$ with $k\neq j$, so in particular, $Z_i\ngeq_TX_j$, so $Z_i$ does not generically compute $Z_j$.


\end{proof}

\section{No Minimal Pair}

Despite this fact, however, generic computation admits a worthwhile analysis in terms of what can be computed from a real. Certainly, if $X\geq_TY$, then the set of things that $X$ generically computes contains the set of things $Y$ generically computes, but the lack of transitivity does not allow one to draw the obvious conclusions one would like to draw. Indeed, it turns out that the reals have the recursion theoretically counterintuitive property that there are no minimal pairs for generic computation:

\begin{thm}\label{main}
For any nonrecursive reals $A$ and $B$ (and thus for any non-generically-computable reals $A$ and $B$), there exists a real $C$ such that $C$ is not generically computable, but such that $C$ is both generically $A$-computable and generically $B$-computable.

\end{thm}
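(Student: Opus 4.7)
The plan is to build $C$ stagewise, simultaneously constructing two witnesses $\psi_A$ (partial $A$-recursive, density-$1$ domain, $\psi_A \subseteq C$) and $\psi_B$ (the analogue for $B$), while meeting diagonalization requirements $N_e$ stating that the $e$-th partial recursive function $\varphi_e$ is not a generic computation of $C$. Partition $\mathbb{N}$ into recursive intervals $I_0, I_1, \ldots$ with geometrically growing lengths, and allocate requirements to intervals so that each $N_e$ receives infinitely many ``$A$-intervals'' and infinitely many ``$B$-intervals,'' the collection of $A$-intervals having density $0$ in $\mathbb{N}$ and similarly for the $B$-intervals.

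On an $A$-interval assigned to $N_e$, I would use $A$ as an oracle to locate some position $p$ in the interval at which $\varphi_e$ converges, and set $C(p) := 1 - \varphi_e(p)$. The witness $\psi_A$ is extended to cover the whole interval (including the diagonalizing bit, which $A$ knows), while $\psi_B$ simply leaves the interval out of its domain; since the $A$-intervals collectively have density zero, this omission does not damage the density of the domain of $\psi_B$. The construction on $B$-intervals is symmetric.

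The main obstacle is defining $C$ outside the diagonalization positions so that $C$ is not itself generically computable via the ``default'' bits. If those bits were generated by a recursive rule, then a partial recursive function with density-$1$ domain could simply mimic the default and agree with $C$ everywhere on its domain, foiling the construction. My strategy would be to make the default itself carry non-recursive information recoverable from either $A$ or $B$ alone---for instance, by embedding within each interval a secondary coding that uses the self-similar structure from Observation \ref{R}, exploiting that any density-$1$ subset of a faithful coding of $A$ (respectively $B$) recovers $A$ (respectively $B$). Each $A$-interval's ``non-diagonalizing'' bits can be a block of $\mathcal{R}(A)$-style coding that $A$ computes outright and $B$ ignores, and dually for $B$-intervals.

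The hardest step, and where I expect the real content of the proof to lie, is the density bookkeeping: verifying simultaneously that (i) for every partial recursive $\varphi_e$ with density-$1$ domain, infinitely many intervals assigned to $N_e$ on the $A$-side (say) contain a position of $\mathrm{dom}(\varphi_e)$ on which our $A$-search succeeds, so diagonalization lands inside $\mathrm{dom}(\varphi_e)$; and (ii) the default-value scheme does not accidentally admit a density-$1$ partial recursive function that agrees with $C$ on its domain. Both verifications should reduce to a pigeonhole argument built on the geometric growth of $|I_n|$ together with the density-$1$ hypotheses, combined with the nonrecursiveness of $A$ and $B$ to rule out recursive shortcuts through the $\mathcal{R}$-coded defaults.
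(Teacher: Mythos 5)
There is a genuine structural gap in the density bookkeeping that your own plan flags as the ``hardest step,'' and it is not just a matter of care. You partition $\mathbb{N}$ into intervals, assign each requirement $N_e$ infinitely many $A$-intervals (collectively density $0$) and infinitely many $B$-intervals (collectively density $0$), and intend $\psi_A$ to leave the $B$-intervals out of its domain and $\psi_B$ to leave the $A$-intervals out. But then a density-$1$ remainder of $\mathbb{N}$ is neither an $A$-interval nor a $B$-interval, and that remainder is exactly where the conflict lives: $\psi_A$ and $\psi_B$ must both converge on almost all of it (to have density-$1$ domains), so $C$ there is computable in $A$ and in $B$ separately, while you simultaneously want its bits to carry non-recursive information so that a recursive partial function cannot just mimic the default. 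Your $\mathcal{R}(A)$-style default on the remainder breaks $\psi_B$; a recursive default reintroduces the mimicry problem. Independently, the diagonalization step (i) fails as stated: since the intervals assigned to $N_e$ have density $0$ in total, a partial recursive $\varphi_e$ whose domain is density-$1$ can simply avoid all of them, so your $A$-search never finds a convergent $\varphi_e(p)$ and $N_e$ is never met.

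The paper dodges both obstacles with a different decomposition. It first proves a reduction lemma: a density-$1$ set $X$ is generically computable from $A$ iff $A$ can enumerate a density-$1$ subset of $X$. This exploits the one-sidedness of generic computation of a density-$1$ set --- you only ever need to affirm membership, never rule it out --- so neither witness has to ``cover'' the $0$ bits of $C$ at all. Concretely, the paper builds total Turing functionals $\varphi,\psi$ on all of $2^\omega$, sets $C = \varphi^A \cup \psi^B$, and diagonalizes against every r.e.\ $W_e$ being a density-$1$ subset of $C$ using recursive trees $T_e$ whose infinite paths encode pairs $(X,Y)$ with $W_e\subseteq \varphi^X\cup\psi^Y$; markers placed along a distinguished ($\Delta^0_2$) path force gaps of size $2^{-e}$, so at most one pair per $e$ is sabotaged, and the low/non-$\Delta^0_2$ case analysis (Propositions \ref{1}--\ref{3}) shows $A,B$ can always dodge the sabotaged pairs. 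Your proposal would need, at minimum, a replacement for this reduction lemma or an entirely different way to handle the density-$1$ remainder before it could go through.
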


The method for proving this statement will be to prove that for any nonrecursive reals $A$ and $B$, there are density-1 subsets of $\mathbb{N}$ recursive in $A$ and $B$ such that the union of the two subsets has no density-1 r.e. subset. Then $C$ will be the union of those two subsets. This will suffice by the following lemma:

\begin{lem}
Let $X$ be a density-1 subset of $\mathbb{N}$. Then for any real $A$, $A$ can generically compute $X$ if and only if $A$ can enumerate a density-1 subset of $X$.
\end{lem}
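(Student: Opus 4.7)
The plan is to prove both directions directly from the definitions, exploiting the fact (already noted after the definition of density) that the intersection of two density-1 sets is density-1.

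For the forward direction, suppose $A$ generically computes $X$ via a partial $A$-recursive function $\varphi$ whose domain $D$ is density-1 and which agrees with the characteristic function of $X$ on $D$. I would let $Y = \{n : \varphi(n) = 1\}$. This set is r.e. in $A$, so $A$ can enumerate it. By the correctness condition, $Y \subseteq X$. To see $Y$ is density-1, observe that $Y = D \cap X$: on the domain $D$, the function $\varphi$ outputs $1$ exactly when $n \in X$. Since both $D$ and $X$ are density-1, so is their intersection, hence $Y$ is a density-1 subset of $X$ enumerable from $A$.

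For the reverse direction, suppose $A$ enumerates a density-1 set $Y \subseteq X$. I would define a partial $A$-recursive $\varphi$ by running the $A$-enumeration of $Y$ and setting $\varphi(n) = 1$ as soon as $n$ is enumerated (and leaving $\varphi(n)$ undefined otherwise). Then $\mathrm{dom}(\varphi) = Y$, which is density-1, and $\varphi(n) = 1$ implies $n \in Y \subseteq X$. The function $\varphi$ never outputs $0$, so the condition ``$\varphi(n) = 0 \Rightarrow n \notin X$'' is vacuously satisfied. Thus $\varphi$ witnesses that $A$ generically computes $X$.

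There is no real obstacle here; the only conceptual point worth stressing is that in a generic computation one is allowed to leave a bit undefined rather than commit to $0$, so enumerating membership is already enough when the target set $X$ is density-1. In particular, the hypothesis that $X$ itself is density-1 is what allows a density-1 subset of $X$ to play the role of the (density-1) domain of the generic computation.
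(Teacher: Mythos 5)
Your proposal is correct and matches the paper's proof essentially line for line: both directions use exactly the same witnessing sets, the key identity $Y = D \cap X$, and the observation that a partial function outputting only $1$ on a density-1 subset of $X$ already constitutes a generic computation.
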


\begin{proof}
If $A$ can enumerate a density-1 subset $Y$ of $X$, then $A$ generically computes $X$ via the partial function $\varphi(n)=1$ iff $n\in Y$. The domain of this function is $Y$, which has density 1, and all the information it gives about $X$ is correct.

Conversely, if $A$ generically computes $X$, choose $\varphi$ such that $\varphi^A$ is a generic computation of $X$. Then let $Y=\lbrace n\ |\ \varphi^A(n)=1\rbrace$. $Y$ is clearly enumerable from $A$. Also, $Y$ is a subset of $X$ because a generic computation is not allowed to give incorrect answers. Finally $Y$ is density-1, because it is the intersection of two density-1 sets ($X$, and the domain of $\varphi^A$).
\end{proof}

The proof of Theorem \ref{main} will be comprised of three parts:

First, we prove Proposition \ref{1}, that if neither $A$ nor $B$ is $\Delta_2^0$, then $A$ and $B$ do not form a minimal pair for generic computability. Then, with the help of a technical lemma, we generalize this proof to prove Propositions \ref{2} and \ref{3}, the corresponding results for when one, or both of them are $\Delta_2^0$. Proposition \ref{3} is already proved in a paper of Downey, Jockusch, and Schupp, but the technical lemma that we use to prove Proposition \ref{2} proves Proposition \ref{3} as well.\\

We begin by introducing some terminology that will be used for the proofs. Let $P_i=\lbrace n\in\mathbb{N}\ \ |\ \ 2^i\leq n<2^{i+1}\rbrace$. Note then that $\mathbb{N}$ is the disjoint union of the $P_i$ together with $\lbrace0\rbrace$. For $X\subseteq\mathbb{N}$, we say that $X$ has a gap of size $2^{-e}$ at $P_i$ if the last $2^{i-e}$ many elements of $P_i$ are not elements of $X$. Note then the following lemma:

\begin{lem}\label{gaps}
If the only elements missing from $X$ are from gaps of the form just described, then $X$ is density-1 if and only if for every $e$, $X$ has only finitely many gaps of size $2^{-e}$
\end{lem}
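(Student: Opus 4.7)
The plan is to convert the structural hypothesis on $X$ into a direct density computation and to read off the equivalence. First I would normalize: the only elements missing from $X$ are tail-type gaps within individual blocks $P_i$, and any union of such tails at a fixed $P_i$ is itself a tail. Hence for each $i$ there is a unique minimal $e_i\in\{0,1,\dots,\infty\}$ with $P_i\setminus X$ equal to the last $2^{i-e_i}$ elements of $P_i$ (with $e_i=\infty$ encoding $P_i\subseteq X$). Writing $g_i:=|P_i\setminus X|/|P_i|\in\{0\}\cup\{2^{-e}:e\ge 0\}$, the hypothesis ``$X$ has a gap of size $2^{-e}$ at $P_i$'' is equivalent to $g_i\ge 2^{-e}$, and the conclusion to prove becomes: $X$ has density $1$ if and only if $g_i\to 0$.

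For the forward implication I would argue by contrapositive. Suppose some $e$ admits infinitely many $i$ with $g_i\ge 2^{-e}$. For each such $i$ the last $2^{i-e}$ elements of $P_i$ are missing from $X$, so at $n=2^{i+1}-1$ we have $|X\cap[1,n]|/n\le 1-2^{-e-1}+o(1)$. Hence the density of initial segments of $X$ is bounded away from $1$ along infinitely many $n$, so $X$ is not density-$1$.

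For the converse, fix $\varepsilon>0$ and choose $e$ with $2^{-e}<\varepsilon$. By hypothesis there is $N$ with $g_i<2^{-e}$ for all $i\ge N$. For $n\in P_I$ with $I\ge N$, bounding the missing count block by block gives
\[
|X^c\cap[1,n]|\;\le\;2^N+\sum_{N\le i\le I}2^i g_i\;\le\;2^N+2\cdot 2^{-e}\cdot 2^I\;\le\;2^N+2\varepsilon n,
\]
so for $n$ sufficiently large $|X\cap[1,n]|/n\ge 1-3\varepsilon$, giving density $1$.

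The only point requiring a moment's care is that ``density $1$'' is a condition on all $n$ and not merely on $n$ of the form $2^{i+1}$; but because every gap sits flush with the right end of its block, the density within a block is minimized at its right endpoint, so the block-level estimates above control the density at every $n$. The remainder is bookkeeping with geometric sums, and I don't anticipate any genuine obstacle.
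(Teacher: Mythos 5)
Your proof is correct and takes essentially the same approach as the paper: the forward direction reads off a density deficit of roughly $2^{-e-1}$ at the right endpoint of each block containing a gap of size $2^{-e}$, and the converse bounds the total missing measure block-by-block past a cutoff $N$. You simply carry out the bookkeeping (the $g_i$ notation and the geometric-sum estimate) more explicitly than the paper does.
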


\begin{proof}
If $X$ has a gap of size $2^{-e}$ at $P_i$ then $\frac{|X\upharpoonright2^i|}{2^i}\leq1-2^{-e-1}$, so in particular, if $X$ has infinitely many gaps of size $2^{-e}$, it does not have density 1.

Conversely, if there is some $i$ such that after $P_i$, all of the gaps in $X$ have size $\leq2^{-e}$, then for $n\geq2^{i+1}$, the density of $X\upharpoonright n$ will always be $\geq1-2^{-e+1}$. (Note that since the gaps appear at the ends of the $P_i$, the local minima of the density of $X$ always occur at the end of a $P_i$.) If for every $e$, there exists such an $i$, then the limiting density of $X$ will be 1.
\end{proof}

Before proving any of the propositions, we prove a result that is a direct corollary of the main theorem, and that also will not be used in the proof of the theorem. The proof is short though, and the methods generalize to prove Propositions \ref{1}, \ref{2}, and \ref{3}.

\begin{prop}\label{5}
For any nonrecursive real $A$, there is a density-1 set $\varphi^A$ that is recursive in $A$ such that $\varphi^A$ has no density-1 r.e. subset.
\end{prop}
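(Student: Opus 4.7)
The plan is to construct $X = \mathbb{N} \setminus D$ where $D \le_T A$ is density-$0$ and meets every density-1 r.e.\ set; the preceding Lemma then gives that $X$ is density-1 and has no density-1 r.e.\ subset (any such subset would have to avoid $D$, contradicting that $D$ meets it). The set $D$ will be described in terms of tail gaps as in the paragraph preceding Lemma~\ref{gaps}: for each requirement $R_e$ (responsible for defeating $W_e$), place one tail gap in $X$ of size $2^{-(e+2)}$ at some $P_{i_e}$, with $\langle i_e\rangle$ a strictly increasing sequence recursive in $A$. Because only one gap of each dyadic size appears, Lemma~\ref{gaps} automatically yields that $X$ is density-1.

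To satisfy $R_e$, namely that $W_e$ is not a density-1 subset of $X$, I would use $A$ to locate an index $i$ and a stage $s$ at which $W_{e,s}$ already has an element inside the last $2^{i-(e+2)}$ positions of $P_i$. Once found, commit $i_e := i$ and install the tail gap there; the witnessing element then lies in $W_e \cap X^c$, so $W_e \not\subseteq X$. That such a pair $(i,s)$ must exist whenever $W_e$ is density-1 follows from a short calculation: density-1 of $W_e$ forces $|W_e \cap P_i|/|P_i| \to 1$, so for $i$ sufficiently large the number of elements of $P_i$ missing from $W_e$ is smaller than $2^{i-(e+2)}$, forcing the designated tail to intersect $W_e$.

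The main obstacle is to carry out this search using only the oracle $A$, uniformly in $e$, for every nonrecursive $A$, not just those of hyperimmune degree. The plan is to set up a recursive schedule of candidate pairs $(i,s)$ and let the bits of $A$ drive the selection; if no $A$-recursive selection succeeded for some $e$, one could recursively predict enough of $A$ from the enumeration of $W_e$ to contradict $A$ being nonrecursive. Verifying this delicately (in particular, handling the hyperimmune-free nonrecursive case) is the technical heart of the proof. The remaining routine checks --- that $D$ is density-$0$ and $\le_T A$ --- follow from the construction, since each tail has relative size $2^{-(e+2)}$ and the $i_e$ are $A$-recursive.
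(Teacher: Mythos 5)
Your high-level setup is correct and matches the paper's: use Lemma~\ref{gaps}, diagonalize against each $W_e$ by placing a single tail gap of a size determined by $e$, and conclude that $\varphi^A$ is density-1 with no density-1 r.e.\ subset. The fatal gap is exactly what you label ``the technical heart'': you give no actual mechanism for how $A$ ``drives the selection'' of the gap location, and the naive search you describe does not work. Observe that finding a pair $(i,s)$ with $W_{e,s}$ hitting the last $2^{i-(e+2)}$ positions of $P_i$ is a purely recursive (but possibly divergent) search --- $A$ contributes nothing to it. The real obstacle is a timing problem: you must define $X$ on $P_i$ at some finite stage, but at that stage you cannot know whether $W_e$ will \emph{eventually} enumerate something into the gap region of $P_i$. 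Committing the gap before seeing a witness risks losing density-1 (if you commit infinitely often at a fixed size); committing it only after seeing a witness means you may have already defined $X$ on $P_i$ without the gap. Your closing claim that ``if no $A$-recursive selection succeeded, one could recursively predict $A$'' is precisely the assertion a proof must establish, and nothing in the proposal substantiates it.

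The paper resolves this with a genuinely different device. It builds a recursive total Turing \emph{functional} $\varphi$ once and for all, with no reference to $A$ in the construction. For each $e$, the set of oracles $X$ with $W_e \subseteq \varphi^X$ forms a recursive tree $T_e$; at each stage the $e$th strategy marks the shortest unmarked node on (the current approximation to) the leftmost infinite path of $T_{e,s}$, and installs a gap of size $2^{-e}$ at $P_s$ in $\varphi^X$ exactly for those $X$ extending the marked node. This has two consequences: if $T_e$ is finite, then $W_e \not\subseteq \varphi^X$ for every $X$; if $T_e$ is infinite, then $W_e$ fails to be density-1, and moreover the \emph{only} $X$ accumulating infinitely many $2^{-e}$-gaps is the leftmost path $X_e$ of $T_e$. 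Running the same construction with rightmost paths gives a second functional $\psi$, and since no real can simultaneously be the leftmost path of one recursive tree and the rightmost path of another without being recursive, one of $\varphi^A$, $\psi^A$ works for any nonrecursive $A$. So the mechanism you are missing is: shift from building a single $A$-recursive set to building an oracle-indexed family, arrange that the ``bad'' oracles form a thin, syntactically definable class (leftmost/rightmost paths of recursive trees), and then observe that a nonrecursive $A$ automatically avoids that class. Without some such idea your construction does not handle the hyperimmune-free case, as you suspected.
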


First, a brief overview: we will define a total Turing functional $\varphi$ on $2^\omega$. For each $e$, there will be a strategy that diagonalizes against $W_e$ being a density-1 subset of $\varphi^X$ for any real $X$. In doing so, the $e$th strategy will cause at most one real $X_e$ to have the property that $\varphi^{X_e}$ is not density-1. $X_e$ will be the leftmost path through a recursive tree $T_e$, so repeating the same argument with rightmost paths gives a pair of functionals $\varphi$ and $\psi$ such that for any nonrecursive $X$ either $\varphi^X$ or $\psi^X$ is density-1.

\begin{proof}
Over the course of the construction, after stage $s$, $\varphi^X\upharpoonright2^s$ will be defined for every $X$, using at most the first $s$ bits of $X$. This will be useful in verifying that the strategies work as they are supposed to.
\\

The $e$th strategy acts as follows:

Define $T_e$ as the tree whose infinite paths consist of the reals $X$ such that $W_e\subseteq\varphi^X$. Note that $T_e$ is a recursive tree, since we can determine the $l$th level of $T_e$ in the following way. Run the enumeration of $W_e$ for the first $l$ steps. For every $n$ less than $2^l$, if $W_e$ has enumerated $n$, remove any $\sigma$ of length $l$ such that $\varphi^\sigma(n)=0$. As long as $l$ is less than the stage $s$ of the construction, this can be accomplished recursively. Let $T_{e,s}$ be the tree consisting of all extensions of the $(s-1)$th level of $T_e$.


At stage $s$, if $s<e$, do nothing. Also, if $T_{e,s}$ is finite, do nothing. Else, place a marker $p_s$ on the leftmost infinite path of $T_{e,s}$, at the shortest $\sigma$ on that path such that $\sigma$ has no marker. Then define $\varphi^X\upharpoonright P_s$ for all $X$ by placing a gap of size $2^{-e}$ into $\varphi^X$ at $P_s$ if $\sigma\prec X$, and by not placing such a gap if $\sigma\nprec X$.

The idea here is that the marker $p_s$ signifies the existence of a trap at $P_s$: $W_e$ must either avoid enumerating any of the elements of the gap at $P_s$, thereby creating another instance of its density dropping below $1-2^{e+1}$, or it must enumerate some of those elements, thereby removing all extensions of $\sigma$ from the tree $T_e$.

Note then that if $T_e$ is finite, then for every $X$, $\varphi^X$ has only finitely many gaps of size $2^{-e}$. Furthermore, in this case, the strategy has guaranteed that $W_e\nsubseteq\varphi^X$ for any $X$. (After the stage at which the tree is seen to be finite, the $e$th strategy stops acting. This stage is precisely the point at which, for every $X$, $W_e$ has enumerated something not in $\varphi^X$.)

On the other hand, if $T_e$ is infinite, then the leftmost path, $X_e$, of $T_e$ has infinitely many markers on it, so $\varphi^{X_e}$ has infinitely many gaps of size $2^{-e}$ so in particular, $W_e$ is not density-1. ($W_e\subset\varphi^X$ for every infinite path $X$ through $T_e$. This is because of how $T_e$ is defined.) Furthermore, if $X\neq X_e$ then $X$ has only finitely many markers on it, so $\varphi^X$ has only finitely many gaps of size $2^{-e}$ (If $X\neq X_e$ then there is some stage $s$ and some $\sigma\prec X$ such that after stage $s$, $\sigma$ never looks like it might be an initial segment of the leftmost path of $T_e$, so after that stage $s$, $X$ can only get at most $|\sigma|$ many markers placed on it)

Finally, note that the strategies have no need to interact: they ignore each other's markers and trees, and at state $s$, at most $s+1$ many strategies are eligible to act, and $\varphi^X$ gets defined on $P_s$ for every $X$ by just defining $\varphi(X)$ to be the intersection of the sets that each strategy wants $\varphi^X$ to be. By Lemma \ref{gaps}, a real $X$ will have the property that $\varphi^X$ is not density-1 if and only if some specific strategy causes $\varphi(X)$ to not be density-1. Thus, the only reals $X$ such that $\varphi^X$ is not density-1 are the $X_e$.

As mentioned in the overview, repeating the same construction again with rightmost paths will finish the proof, since if $X$ is the leftmost path of one recursive tree and the rightmost path of another, then $X$ is recursive. If this is not the case, then for one of the two constructions, the set computed from $X$ is density-1 and has no density-1 r.e. subset.

\end{proof}

Next, we proceed to prove Proposition \ref{1}. The proof is effectively the same as the proof of Proposition \ref{5}, with the only major modification being that we define two functionals simultaneously, and replace $T_e$ with a 4-ary branching tree whose paths correspond to pairs of reals $\langle X,Y\rangle$ such that $\varphi^X\cup\psi^Y\subset W_e$.

\begin{prop}\label{1}
If $A$ and $B$ form a minimal pair for generic computability, then either $A$ or $B$ is $\Delta_2^0$.
\end{prop}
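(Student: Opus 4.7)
The plan is to mimic the proof of Proposition \ref{5} using two Turing functionals $\varphi$ and $\psi$ and a $4$-ary branching tree of pairs, proving the contrapositive: if neither $A$ nor $B$ is $\Delta_2^0$, then there exists a density-$1$ real $C = \varphi^A \cup \psi^B$ with no density-$1$ r.e.\ subset, while $\varphi^A$ and $\psi^B$ are density-$1$ subsets of $C$ enumerable from $A$ and $B$ respectively. By the earlier lemma equating generic computation of density-$1$ sets with enumeration of density-$1$ subsets, this will make $C$ generically $A$-computable and generically $B$-computable but not generically computable, so $A$ and $B$ are not a minimal pair.

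The construction defines $\varphi$ and $\psi$ simultaneously in stages, so that by the end of stage $s$ the strings $\varphi^X \upharpoonright 2^s$ and $\psi^Y \upharpoonright 2^s$ are fixed using at most the first $s$ bits of $X$ and $Y$. For each $e$, define the recursive tree $T_e$ consisting of pairs $\langle \sigma, \tau \rangle$ of equal length such that $W_e \subseteq \varphi^\sigma \cup \psi^\tau$, with recursive stagewise approximations $T_{e,s}$ as in Proposition \ref{5}. The $e$-th strategy at stage $s \geq e$, provided $T_{e,s}$ is infinite, places a marker at the shortest unmarked initial segment $\langle \sigma_s, \tau_s \rangle$ of the lexicographically least infinite path of $T_{e,s}$ (pairs ordered first by $\sigma$ then by $\tau$), and puts a gap of size $2^{-e}$ at $P_s$ in $\varphi^X$ for every $X \succ \sigma_s$ and in $\psi^Y$ for every $Y \succ \tau_s$. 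As in Proposition \ref{5}, different strategies do not interact: on $P_s$, $\varphi^X$ and $\psi^Y$ are defined as the intersection of what each strategy wants.

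The key analysis is that if $T_e$ is infinite then its lex-least infinite path $\langle X_e, Y_e \rangle$ has both coordinates $\Delta_2^0$: the coordinate projections of $[T_e]$ are $\Pi_1^0$ classes (by compactness), whose leftmost paths are $\Delta_2^0$. Since $T_{e,s}$ decreases monotonically to $T_e$, the least path of $T_{e,s}$ converges from the left to $\langle X_e, Y_e \rangle$; exactly as in Proposition \ref{5}, for any $A \neq X_e$ only finitely many of the markers $\sigma_s$ are initial segments of $A$. Hence if $A$ is not $\Delta_2^0$, then $A \neq X_e$ for every $e$, so $\varphi^A$ has only finitely many gaps of each size $2^{-e}$ and is density-$1$ by Lemma \ref{gaps}; symmetrically $\psi^B$ is density-$1$ under the same hypothesis on $B$, so $C$ is density-$1$.

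To verify that $C$ has no density-$1$ r.e.\ subset, suppose $W_e \subseteq C = \varphi^A \cup \psi^B$. Then $\langle A, B \rangle$ is an infinite path of $T_e$, so $T_e$ is infinite, $\langle X_e, Y_e \rangle$ exists, and $W_e \subseteq \varphi^{X_e} \cup \psi^{Y_e}$ as well. But infinitely many markers $\langle \sigma_s, \tau_s \rangle$ sit at initial segments of $\langle X_e, Y_e \rangle$, so $\varphi^{X_e} \cup \psi^{Y_e}$ has infinitely many gaps of size $2^{-e}$ and is not density-$1$; hence $W_e$ itself is not density-$1$. The main obstacle I anticipate is bookkeeping the convergence of the lex-least path of the shrinking tree $T_{e,s}$ cleanly enough to verify the ``only finitely many markers land on $A$'' claim for the pair setting — a routine but slightly delicate enhancement of the corresponding step in Proposition \ref{5}.
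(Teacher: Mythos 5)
Your construction mirrors the paper's proof closely, and the overall architecture is right. The one real problem is the sentence justifying why both coordinates of $\langle X_e, Y_e\rangle$ are $\Delta_2^0$: you assert that ``the coordinate projections of $[T_e]$ are $\Pi_1^0$ classes (by compactness), whose leftmost paths are $\Delta_2^0$,'' implicitly identifying $X_e$ and $Y_e$ with those leftmost paths. For $X_e$ this is fine under your product ordering (minimize $\sigma$, then $\tau$): $X_e$ really is the leftmost path of the first-coordinate projection tree. But $Y_e$ is not the leftmost path of the second-coordinate projection. It is the leftmost $Y$ with $\langle X_e, Y\rangle \in [T_e]$, which is the leftmost path of an $X_e$-recursive tree, not of a recursive one. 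A trivial counterexample: if $[T_e] = \{\langle 0^\omega, 1^\omega\rangle, \langle 1^\omega, 0^\omega\rangle\}$, your lex-least path is $\langle 0^\omega, 1^\omega\rangle$, yet the leftmost path of the second projection is $0^\omega$, not $1^\omega$. A priori, the leftmost path of an $X_e$-recursive tree is only $\Delta_2^0(X_e) \subseteq \Delta_3^0$, so the claimed justification does not give you $Y_e \in \Delta_2^0$.

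Fortunately your construction survives, but for a different reason, and one closer to the paper's. You correctly observe that the stagewise leftmost nodes $\langle \sigma_s, \tau_s \rangle$ converge bitwise (their length-$n$ restrictions are eventually nondecreasing in the product order, hence eventually constant), so the limit pair is $\Delta_2^0$ by the limit lemma, and it lies in $[T_e]$ since $T_e$ is closed. That is the argument you should lean on; the $\Pi_1^0$-projection sentence should simply be deleted, or replaced. The paper sidesteps the issue by treating $T_e$ as a single recursive $4$-ary tree and using that its leftmost path, as one real in $4^\omega$, is $\Delta_2^0$ and directly computes both coordinates --- and it explicitly warns that neither $X_e$ nor $Y_e$ is individually the leftmost path of a recursive tree, which is precisely the trap your projection sentence falls into.
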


\begin{proof}
Again, we describe how the $e$th strategy acts at stage $s$:

If $s<e$, do nothing. Otherwise define $T_{e,s}$ as the set of pairs $\langle\sigma,\tau\rangle$ such that $|\sigma|=|\tau|$ and such that $W_{e,s}\upharpoonright 2^{s-1}\subset\varphi_{s-1}^X\cup\psi_{s-1}^Y$ for some $X\succ\sigma$, and some $Y\succ\tau$. Note that this will be recursive, since we define $\varphi$ and $\psi$ applied to $P_s$ by the end of stage $s$.

Then, if $T_{e,s}$ is finite, do nothing. Otherwise put a marker $p_s$ on the leftmost infinite path of $T_{e,s}$, at the shortest pair $\langle\sigma,\tau\rangle$ on that path such that $\langle\sigma,\tau\rangle$ has no marker. Then define $\varphi_s^X\upharpoonright P_s$ and $\psi_s^Y\upharpoonright P_s$ for all $X$ and $Y$ by placing a gap of size $2^{-e}$ into $\varphi_s^X$ at $P_s$ if $\sigma\prec X$, and by not placing such a gap if $\sigma\nprec X$, and likewise placing a gap of size $2^{-e}$ into $\psi_s^Y$ at $P_s$ if and only if $\tau\prec Y$.

Note, as before, that only one path $T_e$ gets infinitely many markers on it, so in particular only finitely many markers are placed on nodes that are not on that path. That path corresponds to a pair of reals $\langle X_e,Y_e\rangle$, and if $X\neq X_e$ then $\varphi^X$ will have only finitely many gaps of size $2^{-e}$. Note also that the leftmost path of $T_e$ computes both $X_e$ and $Y_e$, so in particular, both are $\Delta^0_2$, but it is not true that either is necessarily the leftmost path of a recursive tree, so we are unable to use the previous trick to get them to be recursive.

Again, the strategies do not interfere with each other, and so if $A$ is different from all of the $X_e$ and $B$ is different from all of the $Y_e$ then $\varphi^A$ is density-1, $\psi^B$ is density-1, and $\varphi^A\cup\psi^B$ has no density-1 r.e. subset. By the comment after the statement of Theorem \ref{main}, this suffices.

\end{proof}

Now we prove our technical lemma, which states that the ``leftmost path'' in the above construction can be replaced by any uniformly chosen $\Delta_2^0$ infinite path through $T_e$.

\begin{lem}\label{6}
Let $\mathcal{F}$ be any function from reals to reals such that for a 4-ary branching tree $T$, if $T$ is infinite, then $\mathcal{F}(T)$ is an infinite path through $T$, and such that $\mathcal{F}(T)$ is uniformly recursive in $T^\prime$. Then for any reals $A$ and $B$, one of the following three things holds.

1: $A$ and $B$ do not form a minimal pair for generic computability.

2: There exists a recursive tree $T$ such that $\mathcal{F}(T)\geq_TA$.

3: There exists a recursive tree $T$ such that $\mathcal{F}(T)\geq_TB$.

\end{lem}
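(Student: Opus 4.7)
The plan is to run the construction of Proposition \ref{1} essentially verbatim, but to replace the leftmost path of $T_{e,s}$ in the $e$-strategy by a $\Delta^0_2$ approximation of $\mathcal{F}(T_e)$. The tree $T_e$, defined as in Proposition \ref{1} (the recursive 4-ary tree coding pairs of partial reals $\langle\sigma,\tau\rangle$ still consistent with some extension $\langle X,Y\rangle$ having $W_e\subseteq\varphi^X\cup\psi^Y$), is recursive, so by hypothesis $\mathcal{F}(T_e)$ is recursive in $\emptyset'$ uniformly in $e$. The limit lemma yields a recursive function $(e,n,s)\mapsto\mathcal{F}_s(e,n)$ with $\lim_s\mathcal{F}_s(e,n)=\mathcal{F}(T_e)(n)$. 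At stage $s$ the $e$-strategy (if $s\geq e$ and $T_{e,s}$ is infinite) looks for the shortest unmarked initial segment of $\mathcal{F}_s(e,\cdot)$ lying in $T_{e,s}$, places a marker there, and creates the associated gap of size $2^{-e}$ in $\varphi$ and $\psi$ at $P_s$ exactly as before; if no such initial segment is found, the strategy does nothing at this stage.

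\medskip

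The main verification is that for any $X$ differing from the first coordinate $X_e$ of the pair coded by $\mathcal{F}(T_e)$, only finitely many markers from the $e$-strategy fall on initial segments of $X$ (and symmetrically on the $\psi$ side with $Y$ and $Y_e$). Let $k$ be the least level at which $X$ disagrees with $X_e$, and pick $s_0$ large enough that $\mathcal{F}_s(e,\cdot)$ agrees with $\mathcal{F}(T_e)$ through level $k+1$ for all $s\geq s_0$. Any marker placed at a stage $s\geq s_0$ at depth greater than $k$ sits on a node whose length-$(k+1)$ prefix equals $X_e\upharpoonright(k+1)\neq X\upharpoonright(k+1)$, and hence is not an initial segment of $X$. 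Markers placed before $s_0$ number at most $s_0$, and markers at depth $\leq k$ contribute at most $k+1$ to the count (one per possible initial segment of $X$), giving a finite total $s_0+k+1$. Lemma \ref{gaps} then guarantees that the $e$-strategy contributes only finitely many gaps of size $2^{-e}$ to $\varphi^X$.

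\medskip

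With this verification the proof closes exactly as in Proposition \ref{1}. If $A\neq X_e$ for every $e$ and $B\neq Y_e$ for every $e$, then $\varphi^A$ and $\psi^B$ are density-1 while the trap mechanism still forces $\varphi^A\cup\psi^B$ to have no density-1 r.e.\ subset, so by the remark after Theorem \ref{main} the pair $\langle A,B\rangle$ is not minimal and conclusion 1 holds. Otherwise some $e$ yields $A=X_e$ or $B=Y_e$, and since both coordinates are uniformly recoverable from $\mathcal{F}(T_e)$, we obtain conclusion 2 or 3 with the recursive tree $T=T_e$. The principal obstacle is precisely the approximation control: before each finite level of $\mathcal{F}(T_e)$ stabilizes, the approximation may deposit stray markers on nodes not extending $X_e$, and the entire argument rests on the bookkeeping bound $s_0+k+1$ showing that each incorrect path receives only finitely many such strays.
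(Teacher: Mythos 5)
Your proposal follows the same approach as the paper: replace the leftmost-path rule in the Proposition~\ref{1} construction by markers tracking a $\Delta^0_2$ approximation to $\mathcal{F}(T_e)$, verify that each wrong path receives only finitely many markers once the relevant prefix of the approximation settles, and conclude. Your depth/stage bookkeeping for the "only finitely many stray gaps" verification is correct and matches the paper's argument.

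However, there is a genuine gap that you have glossed over. You write "the tree $T_e$\dots is recursive, so by hypothesis $\mathcal{F}(T_e)$ is recursive in $\emptyset'$ uniformly in $e$. The limit lemma yields a recursive function $(e,n,s)\mapsto\mathcal{F}_s(e,n)$\dots," and then use these approximations to build $\varphi$ and $\psi$. But $T_e$ is defined in terms of the very functionals $\varphi$ and $\psi$ that the construction is producing, so an index for $T_e$ (and hence for the $\Delta^0_2$ approximation to $\mathcal{F}(T_e)$) is not available in advance; it depends on the entire construction, not merely on the stages completed so far (unlike the leftmost path of $T_{e,s}$ in Proposition~\ref{1}, which is determined by the first $s$ stages). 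As written, the construction is circular: you need indices for $\varphi$ and $\psi$ to compute $\mathcal{F}_s(e,\cdot)$, and you need $\mathcal{F}_s(e,\cdot)$ to define $\varphi$ and $\psi$. The paper breaks the circle explicitly via the recursion (fixed point) theorem, fixing in advance an index for the graphs of the functionals being built and using it, together with the assumed uniformity of $\mathcal{F}$, to obtain the $\Delta^0_2$ indices for the $\mathcal{F}(T_e)$. You should invoke the recursion theorem (or give an equivalent careful bootstrapping argument) to make your first step legitimate; without it the approximation $\mathcal{F}_s(e,n)$ is not actually available when the construction needs it.
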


So, for example, letting $\mathcal{F}$ be the function corresponding to the construction in the proof of the low basis theorem, we could prove that if $A$ and $B$ form a minimal pair for generic computability, then either $A$ or $B$ must be low.


\begin{proof}
We modify the proof of Proposition \ref1 by, for each $e$, choosing a $\Delta_2^0$ index for $\mathcal{F}(T_e)$. This can be done uniformly by the fixed point theorem, since we can uniformly compute the trees $T_e$ from the graphs of $\varphi$ and $\psi$, and since $\mathcal{F}$ is assumed to be uniform. (By the fixed point theorem, we may assume we have a fixed index for the graphs of the Turing functionals $\varphi,\psi$ that we build.The graphs are recursive, as at stage $s$, $\varphi^X(n), \psi^Y(m)$ are defined for all $X$ and $Y$ and for all $n,m\leq2^s$.) 

Having chosen a $\Delta_2^0$ index for each $\mathcal{F}(T_e)$, at each stage, instead of placing a marker on the shortest unmarked node of the leftmost infinite path of $T_e$, the $e$th strategy places a marker on the shortest unmarked node of the current approximation to $\mathcal{F}(T_e)$. We then proceed to place the corresponding gaps in $\varphi$ and $\psi$ in the usual way. The $e$th strategy does nothing if $T_{e,s}$ is finite.

Then, as before, if $T_{e,s}$ is infinite, then the only path that gets infinitely many markers is $\mathcal{F}(T_e)$. This is because for any $n$, after the first $n$ bits of $\mathcal{F}(T_e)$ have stabilized to their final configuration, all future markers will be on extensions of $\mathcal{F}(T_e)\upharpoonright n$. In this case, $W_e$ is not density-1, so the $e$th strategy succeeds. If $T_{e,s}$ is finite, then for any $X$ and for any $Y$, $\varphi^X\cup\psi^Y\nsubseteq W_e$ as before.

Then, for each $e$, define $X_e$ and $Y_e$ as before, note that $\mathcal{F}(T_e)$ computes either of them, and note that $\varphi^X$ and $\psi^Y$ are both generic computations of the same non-generically computable real, as long as for every $e$, $X\neq X_e$ and $Y\neq Y_e$.

\end{proof}

We now can prove Proposition \ref{2} as a direct corollary of this lemma

\begin{prop}\label{2}
If $A$ is $\Delta_2^0$ and $B$ is not $\Delta_2^0$ then $A$ and $B$ do not form a minimal pair for generic computability.
\end{prop}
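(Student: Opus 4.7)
The plan is to derive Proposition~\ref{2} as a direct application of Lemma~\ref{6} by choosing $\mathcal{F}$ so that both options~2 and 3 of that lemma fail, forcing option~1. Any admissible $\mathcal{F}$ automatically satisfies $\mathcal{F}(T) \leq_T T' \equiv_T 0'$ for recursive $T$, so option~3 is immediate: $\mathcal{F}(T) \geq_T B$ would force $B \leq_T 0'$, contradicting the hypothesis that $B$ is not $\Delta_2^0$. The remaining work is to construct $\mathcal{F}$ so that $\mathcal{F}(T) \not\geq_T A$ for every recursive 4-ary tree $T$, ruling out option~2.

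I would build such an $\mathcal{F}$ using a uniform cone-avoidance basis theorem relative to $A$. Given an infinite recursive 4-ary tree $T$, construct $\mathcal{F}(T)$ by Jockusch--Soare style forcing with $\Pi_1^0$ subtrees, meeting the requirements $R_i : \Phi_i^{\mathcal{F}(T)} \neq A$. Each $R_i$ is handled by searching $0'$-effectively for a $\Phi_i$-splitting on the current subtree, i.e.\ two nodes $\sigma_0, \sigma_1$ and an $n$ with $\Phi_i^{\sigma_0}(n) \downarrow, \Phi_i^{\sigma_1}(n) \downarrow$, and $\Phi_i^{\sigma_0}(n) \neq \Phi_i^{\sigma_1}(n)$, and restricting to whichever of the two branches disagrees with $A(n)$. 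If no such splitting ever exists, then $\Phi_i^X$ collapses to a single partial recursive function on the subtree; the totality of this function would make $A$ recursive, contradicting nonrecursiveness of $A$. Splitting searches happen in $0'$, and the branch-selection step uses $A$ as oracle, but since $A \in \Delta_2^0$ we have $0' \oplus A \equiv_T 0'$, so $\mathcal{F}$ is uniformly recursive in $T' \equiv_T 0'$ as Lemma~\ref{6} requires.

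Applying Lemma~\ref{6} to this $\mathcal{F}$: option~2 fails by construction, option~3 fails as above, so option~1 holds and $A, B$ do not form a minimal pair for generic computability. The main technical point is verifying that the cone-avoidance construction can be made uniform in the tree's index with output inside $\Delta_2^0$; the essential ingredients are the splitting/no-splitting dichotomy (which resolves each $R_i$ at the $0'$ level) and the $\Delta_2^0$ approximation of $A$ supplying the oracle calls needed to pick disagreeing branches of splittings.
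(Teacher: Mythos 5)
Your proposal is correct and takes essentially the same route as the paper: both apply Lemma~\ref{6} with $\mathcal{F}$ given by the uniform Jockusch--Soare cone-avoidance basis theorem (folding $A$-oracle calls into $0'$ since $A$ is $\Delta_2^0$), and both observe that option~3 fails automatically because $\mathcal{F}(T)$ is $\Delta_2^0$ while $B$ is not. The only difference is that the paper cites the cone-avoidance basis theorem as known, whereas you sketch its proof via the splitting/no-splitting dichotomy.
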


\begin{proof}
For any infinite recursive tree $T$, and any nonrecursive $\Delta_2^0$ set $A$, $0^\prime$ can uniformly (in indices for $T$ and $A$) compute an infinite path $Z\in[T]$ such that $Z\ngeq_T A$.

Apply Lemma \ref6 using the $\mathcal{F}$ representing this computation, and note then that for any $T$, $\mathcal{F}(T)\ngeq A$ by construction. Also, $\mathcal{F}(T)\ngeq B$ because $B$ is not $\Delta_2^0$, and $\mathcal{F}(T)$ is. Therefore, $A$ and $B$ do not form a minimal pair for generic computability.




\end{proof}

Note that this same proof can be modified to prove Proposition $\ref3$, by simultaneously avoiding the cones above both $A$ and $B$.

\begin{prop}\label{3}
\rm (Downey, Jockusch, and Schupp)
\it
If $A$ and $B$ are both $\Delta_2^0$ then $A$ and $B$ do not form a minimal pair for generic computability.
\end{prop}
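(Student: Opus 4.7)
The plan is to apply Lemma~\ref{6} with a function $\mathcal{F}$ that simultaneously avoids both the cone above $A$ and the cone above $B$. Specifically, I would use the following standard cone-avoidance fact: for any infinite recursive tree $T$ and any two nonrecursive sets $A, B$, there is an infinite path $Z \in [T]$ with $A \nleq_T Z$ and $B \nleq_T Z$, and such a $Z$ can moreover be computed uniformly from $T' \oplus A \oplus B$. Since $A$ and $B$ are $\Delta_2^0$ we have $T' \oplus A \oplus B \equiv_T T'$, so the resulting function $\mathcal{F}$ satisfies the hypotheses of Lemma~\ref{6}.

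Having such $\mathcal{F}$ in hand, for every infinite recursive $4$-ary branching tree $T$ we have $\mathcal{F}(T) \ngeq_T A$ and $\mathcal{F}(T) \ngeq_T B$ by construction, so both alternatives (2) and (3) of Lemma~\ref{6} fail. Therefore alternative (1) must hold, and $A$ and $B$ do not form a minimal pair for generic computability.

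The main obstacle is verifying the cone-avoidance lemma in its uniform form: given a recursive index for $T$ and $\Delta_2^0$ indices for $A$ and $B$, one must produce a $\Delta_2^0$ index for an infinite path through $T$ that computes neither $A$ nor $B$. I would carry out the standard finite-extension construction relative to $0'$, building $Z$ along the infinite part of $T$ in stages that interleave the requirements $\Phi_e^Z \neq A$ and $\Phi_e^Z \neq B$. At each stage one uses $0'$ to search either for a node $\tau$ on the infinite part of $T$ such that $\Phi_e^\tau(n) \downarrow$ disagrees with the relevant target at some $n$, or for a node $\tau$ on the infinite part of $T$ such that no extension of $\tau$ on $T$ makes $\Phi_e(n) \downarrow$ for some fixed $n$. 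That one of these two options is always available follows from the nonrecursiveness of $A$ (respectively $B$): otherwise, a recursive search through $T$ for convergence points of $\Phi_e(n)$ would recover the value $A(n)$ (respectively $B(n)$) at every $n$, contradicting nonrecursiveness. Once this is verified, Lemma~\ref{6} closes the argument without further work.
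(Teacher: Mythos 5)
Your argument is exactly the paper's intended route: the paper disposes of Proposition~\ref{3} in one line by saying that the proof of Proposition~\ref{2} should be modified to avoid both cones simultaneously, and your application of Lemma~\ref{6} with a two-cone-avoiding $\mathcal{F}$ does precisely that.

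One caution about your sketch of the cone-avoidance fact itself. The finite-extension version you describe does not quite close: in the case analysis, if neither a disagreeing node nor a divergence-forcing node is found, you try to compute $A(n)$ (or $B(n)$) by ``a recursive search through $T$ for convergence points of $\Phi_e(n)$.'' But the first converging node $\rho$ that such a search finds may well be a dead end of $T$, and your ``otherwise'' hypothesis only guarantees correct answers for nodes on the infinite part of $T$; so the search may return the wrong value and you do not get a contradiction. (Restricting the search to the infinite part instead makes it a $0'$-search, which only yields $A\le_T 0'$ --- vacuous here since $A$ is already $\Delta^0_2$.) The standard repair is the Jockusch--Soare proof via a nested sequence of $\Pi^0_1$ subclasses $P\supseteq P_1\supseteq P_2\supseteq\cdots$ rather than a single path built by finite extensions: in the ``otherwise'' case one of the two classes $P_e^{n,0}$, $P_e^{n,1}$ is empty, and emptiness of a $\Pi^0_1$ class is a $\Sigma^0_1$ event detectable by a genuinely recursive search, which is what delivers $A$ recursive and hence the contradiction. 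With that construction the resulting path (say the leftmost path of $\bigcap_e P_e$) is uniformly $\Delta^0_2$ given recursive indices for $T$ and $\Delta^0_2$-indices for $A$ and $B$, which is exactly what Lemma~\ref{6} needs. The top-level argument is unaffected.
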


\section{Generic Reducibility}

Given the wildly nontransitive nature of generic computation, it seems natural to attempt to generalize it to some transitive notion of reducibility with the same basic properties. The most important properties that one might expect of it would be to preserve the definition of the generically computable sets, and in particular a generic reduction using a generically computable oracle should be the same as a generic computation. In this section, we introduce four notions of generic reducibility, prove that two of them are equivalent, and prove that at least two of them are distinct. We then analyze the implications of Theorem \ref{main} towards these notions of generic reducibility.

\begin{defn}
\rm
A \it generic description \rm of a real $A$ is a set $S$ of ordered pairs $\langle n,x\rangle$, with $n\in\N, x\in\lbrace 0,1\rbrace$, such that:

if $\langle n,0\rangle\in S$ then $n\notin A$,

if $\langle n,1\rangle\in S$ then $n\in A$,

and $\lbrace n\ |\ \exists x\langle n,x\rangle\in S\rbrace$ is density-1.

\end{defn}

It should be mentioned that this notation conflicts slightly with the notation of Jockusch and Schupp, in that they define a generic description as a partial function, and this would be the graph of a generic description, by their definition. For the purposes of generic reduction though, it is more useful to have ``generic description'' be defined as a set, since the output of a generic computation is a generic description, and so the input of a generic reduction should be a generic description. It is interesting to notice, though, that the output of a generic computation of $A$ is more than just a generic description of $A$, in that it is an \it enumeration \rm of a generic description of $A$. For this reason, we define:

\begin{defn}
\rm
A \it time-dependent generic description \rm of a real $A$ is a set $S$ of ordered triples $\langle n,x,l\rangle$, with $n,l\in\N, x\in\lbrace 0,1\rbrace$, such that $\lbrace\langle n,x\rangle\ |\ \exists l\langle n,x,l\rangle\in S\rbrace$ is a generic description of $A$.

\end{defn}

Then, $B$ generically computes $A$ if and only if $B$ can enumerate a generic description of $A$, which is true if and only if $B$ can compute a time-dependent generic description of $A$.

Jockusch and Schupp define generic reducibility via enumeration operators. The relation is basically the one that one might intuitively construct, using only densely much information from the oracle, $A$, to deduce a generic computation of $B$. For those familiar with the notation, the definition is as follows:

\begin{defn}
\rm
A \it generic reduction \rm of $B$ from $A$ is an enumeration operator which, given any generic description of $A$ as input, outputs a generic description of $B$. $B$ is \it generically reducible to $A$ \rm if there exists a generic reduction of $B$ from $A$. In this case, we write $A\geq_GB$.
\end{defn}

For those unfamiliar with the notation,

\begin{defn}
\rm
An \it enumeration operator \rm is an r.e. set $W$ of codes for pairs $\langle n,D\rangle$ where $n\in\N$ and $D$ is a code for a finite subset of $\N$. It is thought of as a function, sending a real $A$ to the set $\lbrace n\ |\ \exists D[D\subseteq A\wedge\langle n,D\rangle\in W]\rbrace$. 
\end{defn}

It should be noted that generic reduction has the following features: The computation of $B$ from $A$ must be uniform in the generic description of $A$, and the computation is only allowed to reference which sets are contained in the graph of the input set when computing a generic description for the output set (so in particular, giving less information about $A$ never results in more information about $B$, and information is not allowed to be deduced from the rate/order of enumeration of the graph of  $A$).

With these comments in mind, we make the following definitions:

\begin{defn}
\rm
A \it time-dependent generic reduction \rm of $B$ from $A$ is a Turing functional which, given any time-dependent generic description of $A$ as input, outputs a time-dependent generic description of $B$. $B$ is \it time-dependently generically reducible to $A$ \rm if there exists a time-dependent generic reduction of $B$ from $A$.
\end{defn}

\begin{defn}
\rm
$B$ is \it non-uniformly generically reducible to $A$ \rm if for every generic description of $A$, there is an enumeration operator which outputs a generic description of $B$ using the given generic description of $A$ as input.
\end{defn}

\begin{defn}
\rm
$B$ is \it non-uniformly time-dependently generically reducible to $A$ \rm if every time-dependent generic description of $A$, can compute a time-dependent generic description of $B$.
\end{defn}

Again, we mention that the ability to compute a time-dependent generic description of a set is equivalent to the ability to enumerate a generic description of the set, and so the difference between the time-dependent and non-time-dependent reductions is entirely a difference in terms of what the input of the reduction is. The outputs are phrased differently just to make transitivity obvious, and also to make it easier to work with any given form of reduction on its own. Also for the remainder of this paper, whenever a non-time-dependent generic description is used as an oracle, only positive information about elements in the description will be used for the computation.

From the definitions, we may immediately conclude the following implications.

\begin{obs}\label{uniform}
The existence of a uniform reduction of either type implies the existence of a non-uniform reduction of the corresponding type.
\end{obs}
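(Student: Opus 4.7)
The plan is to observe that Observation \ref{uniform} is essentially a pure consequence of the logical implication $(\exists W)(\forall S)\,P(W,S)\Longrightarrow(\forall S)(\exists W)\,P(W,S)$, once the definitions are unpacked. I would handle the two cases (non-time-dependent and time-dependent) in parallel, since the arguments differ only in the class of oracles (generic descriptions vs.\ time-dependent generic descriptions) and in the class of operators (enumeration operators vs.\ Turing functionals).

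For the non-time-dependent case, suppose $B$ is generically reducible to $A$ in the uniform sense. By the definition of generic reducibility, there is a single enumeration operator $W$ such that for every generic description $S$ of $A$, the image $W(S)$ is a generic description of $B$. To verify the non-uniform generic reducibility of $B$ from $A$ at a particular generic description $S$ of $A$, I would simply exhibit $W$ itself as the operator required at $S$; by the uniform hypothesis this operator already converts $S$ into a generic description of $B$. The time-dependent case is identical, substituting ``Turing functional'' for ``enumeration operator'' and ``time-dependent generic description'' for ``generic description''; a single functional $\Phi$ that works on every time-dependent generic description of $A$ certainly works on each such description individually.

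The only thing worth checking is that the quantifier ``for every (time-dependent) generic description of $A$'' ranges over the same collection in both the uniform and the non-uniform definitions, so that a uniform witness really does serve as the required pointwise witness. This is immediate from the definitions as stated in the paper. I expect no real obstacle: the content of Observation \ref{uniform} is entirely a matter of swapping quantifiers, and the definitions were set up so that the uniform notion is literally a strengthening of the non-uniform one.
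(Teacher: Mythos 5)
Your proof is correct and matches the paper's approach; the paper simply states that this observation is ``trivially true,'' and your write-up spells out the underlying quantifier swap $(\exists W)(\forall S)P \Rightarrow (\forall S)(\exists W)P$ that makes it so.
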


\begin{obs}\label{timedep}
The existence of a non-time-dependent reduction of either type implies the existence of a time-dependent reduction of either type.
\end{obs}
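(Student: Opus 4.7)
The key observation driving the plan is that a time-dependent generic description of $A$ carries strictly more information than a plain generic description of $A$ (one can always project away the time coordinate to recover a generic description), while on the output side, computing a time-dependent generic description is equivalent to enumerating a generic description. Thus a reduction whose input is the less informative object (a plain generic description, as in the non-time-dependent case) gives rise naturally to a reduction on the more informative object (a time-dependent generic description, as in the time-dependent case), by preprocessing the input and postprocessing the output.

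For the uniform case, the plan is as follows. Suppose $W$ is an enumeration operator witnessing a non-time-dependent generic reduction of $B$ from $A$. I will construct a single Turing functional $\Phi$ such that for every time-dependent generic description $T$ of $A$, $\Phi^T$ is a time-dependent generic description of $B$. The functional $\Phi^T$ uses its oracle to enumerate the set $S=\{\langle n,x\rangle\ |\ \exists l\ \langle n,x,l\rangle\in T\}$ by a standard dovetailing search (query $T$ at $\langle n,x,l\rangle$ for all triples with coordinates bounded by the current stage $s$), feeds the resulting enumeration into $W$, and for each pair $\langle n,x\rangle$ that $W$ outputs at stage $l$, places the triple $\langle n,x,l\rangle$ into the output. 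The projection of the output onto its first two coordinates is exactly $W(S)$, which by the hypothesis on $W$ is a generic description of $B$, so the output is a time-dependent generic description of $B$. Since $\Phi$ was defined independently of $T$, this is a uniform reduction.

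For the non-uniform case, exactly the same conversion works pointwise: given a time-dependent generic description $T$ of $A$, first extract the plain generic description $S$ by projection, then invoke the enumeration operator $W_S$ supplied by the non-uniform non-time-dependent reduction hypothesis applied to $S$, and finally attach stage-of-enumeration timestamps to the output of $W_S$. The dependence of $W_S$ on $S$ (equivalently, on $T$) is exactly the kind of dependence permitted in the non-uniform setting, so the resulting reduction is a non-uniform time-dependent generic reduction.

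I do not expect a genuine obstacle here; the content is essentially bookkeeping between representations. The only point demanding mild care is verifying that the timestamps attached to the output are honest natural numbers and that the enumeration of $S$ from $T$ is well-defined regardless of which specific timestamps happen to appear in $T$ (this is why the enumeration must be driven by dovetailing over triples rather than by reading off the triples of $T$ in some intrinsic order).
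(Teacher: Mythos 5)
Your proposal is correct and follows the same route as the paper's one-line argument: project away the time coordinate of the input to recover a plain generic description, feed it to the given non-time-dependent reduction, and timestamp the output. The paper merely states this observation more tersely; your elaboration of the dovetailing and the bookkeeping is consistent with it and raises no new issues.
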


Observation \ref{uniform} is trivially true. Observation \ref{timedep} is true since from any time-dependent generic description, one can enumerate a generic description, and then simply ignore the order in which the elements were enumerated. The rules of enumeration operators do not allow for an obvious converse to this, although the first proposition that we prove is that the converse of does hold for the uniform generic reductions, and so in particular, the two uniform reductions are equivalent.

\begin{prop}\label{time}
$A\geq_GB$ if and only if the following holds:

There is a Turing functional $\varphi$ such that for any time-dependent generic description $X$ of $A$, $\varphi^X$ is a generic computation of $B$.

\end{prop}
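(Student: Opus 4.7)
The plan is to prove both implications; the forward direction is a routine unfolding of definitions, while the backward direction carries the real content.

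For the forward direction, I would start from an enumeration operator $W$ witnessing $A\geq_G B$ and define $\varphi$ as follows: given oracle $X$ a time-dependent generic description of $A$, use $X$ to enumerate the underlying generic description $\{\langle n,x\rangle : \exists l\,\langle n,x,l\rangle\in X\}$, feed that enumeration to $W$, and output $\varphi^X(n)=x$ as soon as $\langle n,x\rangle$ appears in the resulting enumeration of a generic description of $B$. Correctness of values and density-1 domain of $\varphi^X$ are then immediate from the fact that $W$ produces a generic description of $B$.

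For the backward direction, given such a $\varphi$, I would define $W$ by declaring $\langle\langle n,x\rangle,F\rangle\in W$ exactly when there exists a finite $X_0\subseteq\N\times\{0,1\}\times\N$ with projection $F$ such that $\varphi^{X_0}(n)=x$ where $X_0$ is treated as a characteristic-function oracle; this is clearly r.e. To show $W(G)$ is a generic description of $B$ whenever $G$ is a generic description of $A$, I would verify two things. For correctness of each output: if $F\subseteq G$ witnesses $\langle n,x\rangle$ via some $X_0$, pick $T$ greater than every timestamp touched in the (finite) computation of $\varphi^{X_0}(n)$, and extend $X_0$ to $X = X_0\cup\{\langle a,b,l\rangle:\langle a,b\rangle\in G,\ l>T\}$. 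Then $X$ is a time-dependent generic description of $A$ whose oracle answers agree with those of $X_0$ on every query actually made (new triples carry fresh timestamps), so $\varphi^X(n)=x$ and the hypothesis forces $x$ to be the correct bit of $B$. For density-1 projection of $W(G)$: use the canonical $X_G=\{\langle n,x,l\rangle:\langle n,x\rangle\in G\}$, a time-dependent generic description of $A$; by hypothesis $\varphi^{X_G}$ is a generic computation of $B$ with density-1 domain, and for each such $n$ the positive use $X_0\subseteq X_G$ of the computation witnesses $\langle n,\varphi^{X_G}(n)\rangle\in W(G)$, since the positive queries of $\varphi^{X_G}(n)$ all lie in $X_0$ by construction, and the negative queries, being absent from $X_G$, are also absent from $X_0\subseteq X_G$.

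The main obstacle is reconciling the Turing functional's freedom to use negative oracle information with the enumeration operator's restriction to positive information only. The resolution is built into the two design choices above: recording only the positive use $X_0$ rather than the full use on the membership side, and using the all-timestamps description $X_G$ for the density argument, so that every negative query made by $\varphi^{X_G}$ fails precisely because the underlying $\langle n,x\rangle$-pair is genuinely absent from $G$ — a condition automatically preserved when we pass to any $X_0\subseteq X_G$.
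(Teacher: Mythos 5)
Your proof is correct and takes essentially the same approach as the paper: in both cases the enumeration operator $W$ is built by searching for finite sets of time-stamped positive information on which $\varphi$ converges, and the hypothesis that $\varphi$ succeeds on \emph{every} time-dependent generic description of $A$ is what guarantees that $W$ never enumerates a false bit of $B$. Your fresh-timestamps device in the correctness step (treating the finite $X_0$ itself as the oracle so that every unqueried triple is answered negatively, then extending $X_0$ by the rest of $G$ at timestamps beyond the use) makes precise the point the paper phrases loosely as ``for some $X$ extending $\{\langle\langle n_i,m_i\rangle,l_i\rangle\}$''; it is the right way to ensure the extended $X$ both is a time-dependent generic description of $A$ and returns the same oracle answers as $X_0$, and your density argument via the all-timestamps description $X_G$ matches the paper's intent as well.
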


By Observation \ref{timedep}, we need only prove that the second implies the first.

\begin{proof}
Assume that from every time-dependent generic description $X$ of $A$,  $\varphi^X$ is a generic computation of $B$. Then in particular, there are no time-dependent generic descriptions $X$ such that $\varphi^X$ gives false information about $B$. So to generically reduce $B$ to $A$, one needs only to enumerate everything that $\varphi$ would enumerate, given any ordering of the generic description.

In other words, let $W$ be the set of all $\langle a,D\rangle$ such that $a$ codes an ordered pair $\langle x,y\rangle$, $D$ is a finite set of ordered pairs $\langle n_i,m_i\rangle, i\leq c$, and such that there exists a sequence $\langle l_i\ |\ i\leq c\rangle$ where $\varphi^X$ gives output $y$ on input $x$ for some $X$ extending $\lbrace\langle\langle n_i,m_i\rangle,l_i\rangle\ |\ i\leq c\rbrace$.

Then, for any generic description of $A$, the output of $W$ will be the union of all outputs of $\varphi^X$ for any time-dependent versions of that generic description, and in particular, will be a generic description of $B$.
\end{proof}

Next, we show that neither of the nonuniform reductions is equivalent to the uniform reduction. To prove this, we introduce some notation. Recall from Section 1, the definition of $\mathcal{R}(X)$:

\begin{defn}
\rm
For any real $X$, $n\in \mathcal{R}(X)\leftrightarrow m\in X$, where $2^m$ is the largest power of $2$ dividing $n$.
\end{defn}

Note now the following strengthening of Observation \ref{R}, proved by Jockusch and Schupp \cite{stuff}, but with a proof included for completeness:

\begin{obs}\label{R2}
For any real $X$, $X$ computes $\mathcal{R}(X)$ uniformly and $X$ can be computed uniformly from any generic description of $\mathcal{R}(X)$. Therefore, the map sending $X\mapsto\mathcal{R}(X)$ induces an embedding from the Turing degrees to the generic degrees.
\end{obs}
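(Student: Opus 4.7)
The plan is to verify the two uniformity claims separately and then derive the embedding statement from them. For the first claim, note that given any $n \in \mathbb{N}$, one can compute the largest $m$ with $2^m \mid n$ and then query $X(m)$; this is a total Turing functional $\Phi$ with $\Phi^X = \mathcal{R}(X)$, so $X$ uniformly computes $\mathcal{R}(X)$.

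For the second claim, the key observation is that for each fixed $m$, the set $S_m = \{n : 2^m \text{ is the largest power of } 2 \text{ dividing } n\}$ is a recursive arithmetic progression of density $2^{-m-1} > 0$. If $D$ is the domain of any generic description of $\mathcal{R}(X)$, then $D$ has density $1$ so its complement has density $0$, and in particular $D \cap S_m$ is nonempty (indeed infinite). The Turing functional $\Psi$ which, on input $m$, searches its oracle (viewed as a set of pairs $\langle n,x\rangle$) for the first pair whose first coordinate lies in $S_m$ and outputs the second coordinate, therefore halts and returns the correct value $X(m)$ for every generic description of $\mathcal{R}(X)$. Thus $\Psi$ witnesses the uniform computation of $X$ from any generic description of $\mathcal{R}(X)$.

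Finally, to derive the embedding: it suffices to show that for reals $X,Y$, we have $X \geq_T Y$ if and only if $\mathcal{R}(X) \geq_G \mathcal{R}(Y)$. For the forward direction, suppose $X \geq_T Y$. Given any generic description of $\mathcal{R}(X)$, use $\Psi$ to recover $X$, use the Turing reduction to compute $Y$, and then use $\Phi$ to compute $\mathcal{R}(Y)$ itself; the complete graph of $\mathcal{R}(Y)$ is trivially a generic description of $\mathcal{R}(Y)$, and this whole procedure is uniform (in fact it even gives a Turing computation, which is stronger than required), yielding $\mathcal{R}(X) \geq_G \mathcal{R}(Y)$. For the converse, suppose $\mathcal{R}(X) \geq_G \mathcal{R}(Y)$ via an enumeration operator $W$. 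From $X$, compute $\mathcal{R}(X)$ using $\Phi$, feed the complete graph of $\mathcal{R}(X)$ (which is one particular generic description of it) into $W$ to obtain a generic description of $\mathcal{R}(Y)$, and then apply $\Psi$ to recover $Y$; hence $X \geq_T Y$.

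The only subtle step is the second claim, and the main thing to get right is the density argument: one must verify that $S_m$ has positive density so that any density-$1$ set necessarily contains infinitely many elements of $S_m$, and in particular the search defining $\Psi$ is guaranteed to terminate. Everything else is essentially composition of uniform procedures.
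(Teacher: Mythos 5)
Your proposal is correct and follows essentially the same approach as the paper: uniformly compute $\mathcal{R}(X)$ from $X$ by reading off the appropriate bit, uniformly recover each $X(m)$ from a generic description by searching the positive-density set $S_m$ for a decided bit, and derive the embedding by composing these two uniform procedures in both directions. The only difference is that you spell out the order-preservation as an explicit biconditional where the paper merely sketches it, which is a presentational rather than mathematical distinction.
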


\begin{proof}
$X$ computes $\mathcal{R}(X)$ uniformly, and so generically computes $\mathcal{R}(X)$ uniformly.

Conversely, to compute the $m$th bit of $X$ from a generic description of $\mathcal{R}(X)$, search for any $n$ such that $2^m$ is the largest power of 2 dividing $n$ and such that the generic description of $\mathcal{R}(X)$ has a value for the $n$th bit of $\mathcal{R}(X)$. Use that as the value for the $m$th bit of $X$. There must be such an $n$ because the set of numbers divisible by $2^m$ and not by $2^{m+1}$ has positive density (in fact, has density $\frac1{2^{m+1}}$), and the generic description has values for density-1 many bits of $\mathcal{R}(X)$.

The proof of the embedding follows directly: If $X$ computes $Y$ then any generic description of $\mathcal{R}(X)$ can be used uniformly to compute $X$ and therefore $Y$ and therefore $\mathcal{R}(Y)$. Likewise, if $\mathcal{R}(X)\geq_G\mathcal{R}(Y)$, then $X$ can compute $\mathcal{R}(X)$, which can generically compute $\mathcal{R}(Y)$. $Y$ can then be recovered from this generic description.
\end{proof}

Note that this observation and proof hold for any of the forms of generic reduction.

We now introduce an alternate definition, which would have sufficed for the purposes of Observation \ref{R}, but with the property that one cannot uniformly recover $X$ from a generic description of $\widetilde{\mathcal{R}}(X)$.

\begin{defn}
\rm
For any real $X$, $n\in \widetilde{\mathcal{R}}(X)\leftrightarrow m\in X$, where $2^m$ is the largest power of $2$ less than $n$.
\end{defn}

The key distinction here is that $\mathcal{R}(X)$ codes each of the entries of $X$ into a positive density set, and so any generic description of $\mathcal{R}(X)$ must be able to recover all the entries of $X$, while $n\in \widetilde{\mathcal{R}}(X)\leftrightarrow m\in X$ codes the entries of $X$ into progressively larger sets, each finite, and so any generic description of $n\in \widetilde{\mathcal{R}}(X)\leftrightarrow m\in X$ must be able to recover all but finitely many of the entries of $X$. 

\begin{prop}\label{nonequivalence}

Let $A$ be a real such that one cannot uniformly compute $A$ from an arbitrary cofinite subset of the entries of $A$. Then $\mathcal{R}(A)$ is non-uniformly generically equivalent to $\widetilde{\mathcal{R}}(A)$ (and therefore non-uniformly time-dependently generically equivalent), but $\widetilde{\mathcal{R}}(A)\ngeq_G\mathcal{R}(A)$. Therefore, neither of the non-uniform notions of generic reducibility is equivalent to the uniform notion.

\end{prop}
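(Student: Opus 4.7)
The plan is to prove the proposition in three parts: (a) a uniform reduction $\mathcal{R}(A)\geq_G\widetilde{\mathcal{R}}(A)$; (b) a non-uniform reduction from $\widetilde{\mathcal{R}}(A)$ to $\mathcal{R}(A)$; and (c) the nonexistence of a uniform reduction in the latter direction. Together, (a) and (b) give the non-uniform generic equivalence (and, by Observation \ref{timedep}, its time-dependent analogue), while (c), combined with (b) and Proposition \ref{time}, forces the final clause: if either non-uniform notion coincided with the uniform one, then the non-uniform reduction of (b) would automatically upgrade to a uniform generic reduction, contradicting (c). Part (a) is immediate from Observation \ref{R2}, since a generic description of $\mathcal{R}(A)$ uniformly recovers every bit of $A$, from which one writes down the complete graph of $\widetilde{\mathcal{R}}(A)$.

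For step (b), the density structure of $\widetilde{\mathcal{R}}(A)$ does the work. Given a generic description $S$ of $\widetilde{\mathcal{R}}(A)$, let $F=\lbrace m : S \text{ contains no pair } \langle k,\cdot\rangle \text{ with } k\in[2^m,2^{m+1})\rbrace$. If $F$ were infinite, then for infinitely many $m$ the description $S$ would miss an entire block of length $2^m$, forcing $|S\upharpoonright 2^{m+1}|/2^{m+1}\leq 1/2$ and contradicting density-1; so $F$ is finite. For each $m\notin F$ any single element of $S$ in the $m$-th block reveals $A_m$, and for each $m\in F$ the bit $A_m$ is hard-coded into the enumeration operator chosen for $S$. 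Combining these, the operator enumerates all of $A$ from $S$, and from $A$ it outputs a complete description of $\mathcal{R}(A)$; the non-uniform quantifier is precisely what licenses the hard-coding.

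Step (c) is the main obstacle and is where the hypothesis on $A$ is deployed. Suppose for contradiction that some enumeration operator $V$ witnesses $\widetilde{\mathcal{R}}(A)\geq_G\mathcal{R}(A)$ uniformly. Compose $V$ with the uniform enumeration operator from Observation \ref{R2} that produces a complete description of $A$ from any generic description of $\mathcal{R}(A)$; the result is a single uniform procedure producing $A$ from any generic description of $\widetilde{\mathcal{R}}(A)$. From any cofinite subset $A^*$ of the entries of $A$ one can in turn uniformly build such a generic description of $\widetilde{\mathcal{R}}(A)$: for each $\langle m,x\rangle\in A^*$ and each $k\in[2^m,2^{m+1})$, emit $\langle k,x\rangle$. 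Since $A^*$ has cofinite domain, only finitely many blocks are omitted, and the total number of natural numbers on which the description is undefined is finite, so the support is cofinite (and in particular density-1). Chaining the two constructions yields a single uniform procedure computing $A$ from an arbitrary cofinite $A^*$, contradicting the hypothesis and completing the proof.
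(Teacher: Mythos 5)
Your proposal is correct and takes essentially the same route as the paper: uniform reduction in one direction via Observation \ref{R2}, non-uniform reduction in the other by recovering all but finitely many bits of $A$ and hard-coding the rest, and refutation of uniformity by chaining a cofinite-subset-to-generic-description step with a hypothetical uniform reduction and the uniform recovery of $A$ from a generic description of $\mathcal{R}(A)$. You simply spell out the density and bookkeeping details a bit more explicitly than the paper does.
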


Note that there exists reals satisfying the hypothesis of the proposition, for example any 1-random real, or any arithmetically Cohen generic real.

\begin{proof}

First of all, any generic description of $\mathcal{R}(A)$ can be used uniformly to recover $A$ by Observation \ref{R2}. It can therefore compute $\widetilde{\mathcal{R}}(A)$. Likewise, from any generic description of $\widetilde{\mathcal{R}}(A)$, one can uniformly compute all but finitely many bits of $A$. From this, one can non-uniformly compute $A$, and therefore compute $\mathcal{R}(A)$. However, this cannot be done uniformly, as follows:

Any cofinite subset of the entries of $A$ can be used to uniformly compute a generic description of $\widetilde{\mathcal{R}}(A)$. Any generic description of $\mathcal{R}(A)$ can be used to uniformly compute $A$. Thus, since there is no uniform way to compute $A$ from a cofinite subset of its entries, there is no uniform way to go from a generic description of $\widetilde{\mathcal{R}}(A)$ to a generic description of $\mathcal{R}(A)$.
\end{proof}

As yet, however, there do not appear to be any theorems that have been proved for one form of generic reduction that have not also been proved for every other form. Indeed, for the remainder of this paper generic reduction will refer to any of the definitions.

Theorem \ref{main} sheds very little light on the question of the existence of a minimal pair in the generic degrees. It does, however, strengthen a previous result of Jockusch and Schupp. Borrowing a term from the study of enumeration reducibility, we make the following definition:

\begin{defn}
\rm
A generic degree \bf a \rm is \it quasi-minimal \rm if \bf a \rm is nonzero, and if for every nonrecursive real $X$, the generic degree of $\mathcal{R}(X)$ is not below \bf a\rm.
\end{defn}

Then, while proving that the embedding from the Turing degrees to the generic degrees is not surjective, Jockusch and Schupp \cite{stuff} actually prove the stronger result that there exists a quasi-minimal generic degree. Theorem \ref{main} allows us to strengthen this result: 

\begin{prop}\label{qm}
For every nonzero generic degree \bf a\rm, there is a quasi-minimal generic degree \bf b \rm such that $\bf a\it\geq_G\bf b\rm$ .
\end{prop}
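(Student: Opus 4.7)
Let $A$ be a real representing $\mathbf{a}$; since $\mathbf{a}\neq\mathbf{0}$, $A$ is not generically computable, and in particular $A$ is nonrecursive. The plan is to fix a quasi-minimal real $Q$ (which exists by the Jockusch--Schupp result cited just before this proposition), apply a refinement of Theorem~\ref{main} to $A$ and $Q$, and produce a density-1 real $C$ that is generically reducible both to $A$ and to $Q$. Quasi-minimality of the generic degree of $C$ will then transfer from that of $Q$.

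Since both $A$ and $Q$ are nonrecursive, Theorem~\ref{main} as stated already produces Turing functionals $\varphi,\psi$ so that $C:=\varphi^A\cup\psi^Q$ is density-1, is not generically computable, and is both generically $A$-computable and generically $Q$-computable. Write $\mathbf{b}$ for the generic degree of $C$; it is nonzero for free. The real work is to upgrade these two non-transitive generic-computability relations to the transitive reducibility statements $C\leq_G A$ and $C\leq_G Q$. To accomplish this, I would rework the construction of Proposition~\ref{5} (on which Theorem~\ref{main} is built) so that $\varphi$ and $\psi$ are presented as enumeration operators rather than as Turing functionals: I would relabel the nodes of each recursive tree $T_e$ by finite positive-information conditions on the oracle (instead of by initial segments), and I would redefine the marker-placement rule so that the gap introduced into $\varphi^X$ at $P_s$ depends only on whether certain specified finite subsets of $\mathbb{N}$ are contained in $X$. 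With this reworking, $\varphi^A$ becomes c.e.\ in any positive generic description of $A$; since $\varphi^A$ is density-1 and is a subset of $C$, any such enumeration of $\varphi^A$ is itself a positive generic description of $C$, witnessing $C\leq_G A$. The analogous treatment of $\psi$ gives $C\leq_G Q$.

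Once these two generic reductions are in place, the proposition follows immediately: from $C\leq_G A$ we have $\mathbf{b}\leq_G\mathbf{a}$, and if $\mathcal{R}(X)\leq_G C$ for some nonrecursive $X$, then by transitivity of $\leq_G$ we would obtain $\mathcal{R}(X)\leq_G Q$, contradicting the quasi-minimality of $Q$; hence $\mathbf{b}$ is itself quasi-minimal. The principal obstacle is the refinement described above: rewriting the diagonalization-against-density-1-r.e.-subsets underlying Theorem~\ref{main} so that the construction consults only positive enumeration information from $A$ and $Q$, while keeping intact both the gap-based combinatorics on the intervals $P_i$ and the marker/tree argument that no $W_e$ can end up being a density-1 subset of $C$.
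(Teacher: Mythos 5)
Your plan identifies the right obstacle---that Theorem~\ref{main} produces only the non-transitive ``generically computable from'' relations and these must somehow be upgraded to $\leq_G$---but it then proposes to attack that obstacle head-on by rebuilding $\varphi,\psi$ as enumeration operators so that a generic description of $A$ already suffices to enumerate a generic description of $C$. That reworking is never actually carried out, and it is far from routine: the gap-placement rule in Proposition~\ref{5} inspects whether $\sigma\prec X$, a clopen condition involving both positive and negative information about a specific finite initial segment, whereas a generic description of $A$ may be permanently silent about infinitely many positions, so whatever finite positive conditions you relabel the tree nodes with may simply never be decidable from the given generic description. Indeed, if your rework succeeded, it would in effect prove that no two nonrecursive reals form a minimal pair for generic \emph{reducibility}, which the paper explicitly flags (end of Section~3) as a question its methods do not settle. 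So this is a genuine gap, not a sketch that a careful reader could fill in.

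The paper sidesteps the obstacle entirely and does not touch the internals of Theorem~\ref{main}. Since $\mathbf a$ is either already quasi-minimal (done) or else bounds some $\mathcal{R}(A)$ with $A$ nonrecursive, one chooses $B$ forming a Turing minimal pair with $A$, applies Theorem~\ref{main} to get a non--generically-computable $C$ that both $A$ and $B$ generically compute, and then observes via Observation~\ref{R2} that any generic description of $\mathcal{R}(A)$ uniformly recovers all of $A$; composing this with the Turing functional that generically computes $C$ from $A$ yields $C\leq_G\mathcal{R}(A)\leq_G\mathbf a$, and symmetrically $C\leq_G\mathcal{R}(B)$. Quasi-minimality of $C$'s generic degree then falls out of the Turing minimal pair: $C\geq_G\mathcal{R}(D)$ would force $A\geq_T D$ and $B\geq_T D$, hence $D$ recursive. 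Notice this argument does not need a pre-existing quasi-minimal degree $Q$ at all, and it needs no modification of the diagonalization---the only new ingredient is the trick of routing the reduction through $\mathcal{R}(A)$ so that ``generic description of the oracle'' upgrades for free to ``full Turing access to $A$.'' That is the idea your proposal is missing.
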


\begin{proof}
Let \bf a \rm be a nonzero generic degree. If \bf a \rm is quasi-minimal, then we are done. Else, choose $A$ nonrecursive, so that the generic degree of $\mathcal{R}(A)$ is below \bf a\rm. In the Turing degrees, every nonzero degree is half of a minimal pair, so choose $B$ such that $A$ and $B$ form a minimal pair for Turing reducibility. By Theorem \ref{main}, choose some $C$, not generically computable, such that $A$ and $B$ can both generically compute $C$.

Then $C$ generically reduces to both $\mathcal{R}(A)$, and $\mathcal{R}(B)$, and the generic degree of $C$ must be quasi-minimal, since if there were some $D$ such that $C\geq_G\mathcal{R}(D)$, then we would have $\mathcal{R}(A)\geq_G\mathcal{R}(D)$ and also $\mathcal{R}(B)\geq_G\mathcal{R}(D)$, and therefore $A\geq_TD$ and also $B\geq_TD$, and so $D$ would have to be recursive, since $A$ and $B$ form a minimal pair. Let  \bf b \rm be the generic degree of $C$.
\end{proof}

Unfortunately, it seems very difficult to generalize the methods of this paper to prove the lack of a minimal pair for generic reducibility, since if there is such a pair, then at least one of the two degrees in the pair would have to be quasi-minimal, and the construction in the proof of Theorem \ref{main} involves each of $A$ and $B$ actually computing something which is a generic description of $C$, but the quasi-minimal generic degrees are unable to use a generic reduction to completely compute any nonrecursive real. However, Proposition \ref{qm} gives some hope of a different construction, since it ensures that if there exists a minimal pair for generic reducibility, then there exists one in which both halves of the minimal pair are quasi-minimal.

Working in the other direction, it would also be interesting to know whether two quasi-minimal generic degrees can join to a generic degree that is not quasi-minimal.


\begin{thebibliography}{aa}

\bibitem[1]{stuff}
Carl Jockusch and Paul Schupp, \emph{Generic computability, {T}uring degrees,
  and asymptotic density}, to appear in \emph{Journal of the London Mathematical Society}.



\end{thebibliography}
\end{document}